\allowdisplaybreaks \numberwithin{equation}{section}
\numberwithin{equation}{section}
\newtheorem{theorem}{Theorem}[section]
\newtheorem{proposition}[theorem]{Proposition}
\newtheorem{corollary}[theorem]{Corollary}
\newtheorem{lemma}[theorem]{Lemma}
\theoremstyle{definition}
\newtheorem{definition}[theorem]{Definition}
\theoremstyle{remark}
\newtheorem{remark}[theorem]{Remark}
\def\d{\mathrm{d}}
\newcommand{\R}{\mathbb{R}}
\newcommand{\D}{\mathbb{D}}
\newcommand{\dis}{\displaystyle}
\newcommand{\LL}{\mathcal{L}}
\newcommand{\M}{\mathcal{M}}
\newcommand{\T}{\mathcal{T}}
\newcommand{\s}{\mathcal{S}(\R^2)}
\begin{document}

\title[Radial symmetry of stationary and uniformly-rotating solutions]{Radial symmetry of stationary and uniformly-rotating solutions to the 2D Euler equation in a disc}%[页眉短标题]{文章题目}

\author{Boquan Fan, Yuchen Wang, Weicheng Zhan}

\address{Institute of Applied Mathematics, Chinese Academy of Sciences, Beijing 100190, and University of Chinese Academy of Sciences, Beijing 100049,  P.R. China}
\email{fanboquan22@mails.ucas.ac.cn}

\address{Institute of Mathematics, University of Augsburg, Augsburg, 86159, Germany, School of Mathematics Science, Tianjin Normal University, Tianjin,  300387, P.R. China}
\email{yuchen.wang@uni-a.de}

\address{School of Mathematical Sciences, Xiamen University, Xiamen, Fujian, 361005, P.R. China}
\email{zhanweicheng@amss.ac.cn}

\begin{abstract}
We study the radial symmetry properties of stationary and uniformly rotating solutions of the 2D Euler equation in the unit disc, both in the smooth setting and the patch setting. In the patch setting, we prove that every uniformly rotating patch with angular velocity $\Omega\le 0$ or $\Omega \ge 1/2$ must be radial, where both bounds are sharp. The conclusion holds under the assumption that the rotating patch considered is disconnected, with its boundaries consisting of several Jordan curves. We also show that every uniformly rotating smooth solution $\omega_0$ must be radially symmetric if its angular velocity $\Omega\le \inf \omega_0/2$ or $\Omega\ge \sup \omega_0/2$. The proof is based on the symmetry properties of non-negative solutions to elliptic problems. A newly tailored approach is developed to address the symmetries of non-negative solutions to piecewise coupled semi-linear elliptic equations.
\end{abstract}

\maketitle

\tableofcontents

\bibliographystyle{plain}

\section{Introduction and Main results}

\subsection{The 2D Euler equation}
In this paper, we are concerned with radial symmetry properties for stationary/uniformly-rotating solutions to the two-dimensional incompressible Euler equation in the unit disc $\mathbb{D}$ of the Euclidean space $\R^2$. That system reads as follows:
\begin{align}\label{1-1}
\begin{cases}
\partial_t\omega+\mathbf{v}\cdot \nabla\omega=0,\ (x,t)\in \mathbb{D}\times\mathbb{R}_+,&\\
\mathbf{v}=\nabla^\perp (-\Delta)^{-1}\omega, &\\\
\omega|_{t=0}=\omega_0,
\end{cases}
\end{align}
where $\mathbf{v}=(v_1,v_2)$ is the velocity field and its vorticity is given by the scalar $\omega=\partial_1v_2-\partial_2v_1$, and $\nabla^\perp=(\partial_2, -\partial_1)$. The known function $\omega_0$ is the initial condition.

The first equation in \eqref{1-1} simply
means that the vorticity is constant along particle trajectories. The second equation in \eqref{1-1} tells us how to recover velocity from vorticity, which is referred to as the Biot-Savart law. Let $G(x, y)$ be the Green function for $-\Delta$ in $\mathbb{D}$ with zero Dirichlet data, namely,
\begin{equation*}
  G(x, y)=\frac{1}{2\pi}\ln\frac{1}{|x-y|}-h(x, y),\ \ \ h(x, y)=-\frac{1}{2\pi}\ln \left|\frac{x}{|x|}-|x|y \right|.
\end{equation*}
Then the Biot-Savart law can be written in the form
\begin{equation}\label{1-2}
  \mathbf{v}=\nabla^\perp \mathcal{G}[\omega],\ \ \ \mathcal{G}[\omega] (x):=\int_{\mathbb{D}}G(x, y)\omega(y)\d y.
\end{equation}
A convenient reference for these results is \cite[Chapter 2]{Maj2002MR1867882}. It is well-known since the work of Yudovich \cite{Yud1963MR158189} that any bounded and integrable initial datum $\omega_0$ generates a unique
global in time weak solution of \eqref{1-1}; see also \cite{Bur2005MR2186035}.

\subsection{Stationary/uniformly rotating solutions}

We will focus here on establishing radial symmetry properties for stationary or uniformly rotating solutions to Equation \eqref{1-1}. Our analysis will encompass both the patch setting and the smooth setting.

 Thanks to Yudovich's theorem, one can deal rigorously with vortex patches, which are vortices uniformly distributed in a bounded region $D$, i.e., $\omega_0=1_D$. Since the vorticity is transported along trajectories, we conclude that $\omega(x, t)=1_{D_t}$ for some region $D_t$. For a bounded open set $D\subset \mathbb{D}$, we say that an indicator function $\omega=1_D$ is a \emph{stationary patch solution} to \eqref{1-1} if $\mathbf{v}(x)\cdot \vec{n}(x)=0$ on $\partial D$, where $\mathbf{v}$ is given by \eqref{1-2}, and $\vec{n}(x)$ denotes the outward unit normal to $\partial D$ at $x$. This leads to the integral equation
\begin{equation}\label{1-3}
  \mathcal{G}[1_D]\equiv C_i\ \ \ \text{on}\ \partial D,
\end{equation}
where the constant $C_i$ can differ on different connected components of $\partial D$. Let $\Omega \in \mathbb{R}$ be a fixed real number. We denote by $e^{i\Omega t}$ the counter-clockwise rotation by an angle $\Omega t$ about the origin. We say that $\omega(x, t)=1_D(e^{-i\Omega t} x)$ is a \emph{uniformly rotating vortex patch} with angular velocity $\Omega$ if $1_D$ becomes stationary in a frame rotating with angular velocity $\Omega$, that is, $\left(\nabla^\perp \mathcal{G}[\omega]+\Omega x^\perp\right)\cdot \vec{n}(x)=0$ on $\partial D$. This is equivalent to
\begin{equation}\label{1-4}
  \mathcal{G}[1_D]+\frac{\Omega}{2}|x|^2\equiv C_i\ \ \ \text{on}\ \partial D,
\end{equation}
where the constant $C_i$ again can take different values along different connected components of $\partial D$. Note that a stationary patch $D$ also satisfies \eqref{1-4} with $\Omega=0$ and can be viewed as a special case of a uniformly rotating patch with zero angular velocity.

Similarly, we refer to $\omega(x, t) = \omega_0(e^{-i\Omega t} x)$ as a \emph{uniformly rotating smooth solution} of \eqref{1-1} with angular velocity $\Omega$ (reducing to a stationary solution when $\Omega = 0$), provided the vorticity profile $\omega_0 \in C^2(\overline{\D})$ satisfies $\left(\nabla^\perp \mathcal{G}[\omega_0] + \Omega x^\perp\right) \cdot \nabla \omega_0 = 0$. Consequently, the profile $\omega_0$ fulfills the equation
\begin{equation}\label{1-5}
   \mathcal{G}[\omega_0]+\frac{\Omega}{2}|x|^2\equiv C_i\ \ \ \text{on each connected component of a regular level set of}\ \omega_0,
\end{equation}
where $C_i$ may vary across different connected components of a given regular level set $\{\omega_0=c\}$.

\subsection{Motivations}
Let $B_r(x)$ denote the open disc in $\mathbb{R}^2$ with center $x$ and radius $r>0$. It is easy to see that
\begin{equation*}
\mathcal{G}[1_{B_r(0)}](x)=\begin{cases}\dis \frac{1}{4}\left(r^2+2r^2\ln\frac{1}{r}-|x|^2 \right),& 0\le |x|< r,\\
\dis \frac{r^2}{2}\ln \frac{1}{|x|},&r\le |x|\le 1.\end{cases}
\end{equation*}
Hence, every radially symmetric patch $1_{B_r(0)}$ with $0<r\le 1$ automatically satisfies \eqref{1-4} for all $\Omega\in \R$.

For each $\Omega\in (0, 1/2)$, de la Hoz-Hassainia-Hmidi-Mateu \cite{del2016MR3570233} showed that there exist non-radial uniformly rotating vortex patches with $m$-fold symmetry bifurcating from discs or annuli at $\Omega$. Note that a set is said to be $m$-fold symmetric if it is invariant under rotation by an angle of $2\pi/m$ around its center. They also presented some numerical experiments illustrating the interaction between the boundary of the patch and the rigid boundary.

A natural question arises: \emph{whether there exists a non-radial uniformly rotating vortex patch with angular velocity $\Omega\le 0$ or $\Omega\ge 1/2$.}

The first purpose of this paper is to give a complete answer to this question. We will show that every uniformly rotating vortex patch with angular velocity $\Omega\le 0$ or $\Omega\ge 1/2$ must be radial. Thus, the answer to the above question is no.

It can also be seen that every radially symmetric smooth function automatically satisfies \eqref{1-5} for all $\Omega\in \R$. The second objective of this paper is to establish analogous symmetry properties for uniformly rotating smooth solutions of \eqref{1-1}.  More precisely, we will show that every uniformly rotating smooth solution must be radially symmetric if its angular velocity $\Omega\le \inf \omega_0/2$ or $\Omega\ge \sup \omega_0/2$.

\subsection{Main results}

Our first main result is as follows:

\begin{theorem}\label{thm}
  Let $D\subset \mathbb{D}$ be an open set whose boundary consists of a finite collection of mutually disjoint Jordan curves. Assume $D$ is a stationary/uniformly rotating vortex patch of \eqref{1-1}, in the sense that $D$ satisfies \eqref{1-4} for some $\Omega\in \R$. Then $D$ must be radially symmetric if $\Omega\in (-\infty, 0]\cup [\frac{1}{2}, \infty)$.
\end{theorem}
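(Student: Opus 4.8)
The plan is to study the relative stream function $\psi(x) := \mathcal{G}[1_D](x) + \tfrac{\Omega}{2}|x|^2$, which by \eqref{1-4} is constant on each Jordan curve of $\partial D$ and equals $\Omega/2$ on $\partial\mathbb{D}$. Since $-\Delta \mathcal{G}[1_D] = 1_D$ and $-\Delta(\tfrac{\Omega}{2}|x|^2) = -2\Omega$, it satisfies $-\Delta\psi = 1_D - 2\Omega$ in $\mathbb{D}$. The first key observation is that the hypothesis on $\Omega$ forces a global sign: if $\Omega \le 0$ then $-\Delta\psi = 1_D - 2\Omega \ge 0$, so $\psi$ is superharmonic on all of $\mathbb{D}$, whereas if $\Omega \ge 1/2$ then $-\Delta\psi \le 0$ and $\psi$ is subharmonic. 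In either case the maximum/minimum principle lets me normalize to a nonnegative function $w$ (e.g.\ $w = \psi - \Omega/2 \ge 0$ when $\Omega \le 0$, and $w = \Omega/2 - \psi \ge 0$ when $\Omega \ge 1/2$) that vanishes on $\partial\mathbb{D}$, is constant on each component of $\partial D$, and solves $-\Delta w = \pm(1_D - 2\Omega)$. This recasts the overdetermined patch condition as a piecewise semilinear problem in which $-\Delta w$ is one constant inside $D$ and another outside, with the free boundary $\partial D$ splitting into several level curves carrying possibly distinct constants $c_i$.

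Before running a symmetry argument I would record two structural facts. First, elliptic regularity gives $w \in C^{1,\alpha}(\overline{\mathbb{D}})$, and the overdetermination ``$w \equiv c_i$ on a component of $\partial D$'' together with nondegeneracy of $\nabla w$ there upgrades each Jordan curve to a smooth (indeed analytic) curve by Kinderlehrer--Nirenberg--Spruck-type free boundary regularity, which is what makes Hopf's lemma available at $\partial D$. Second, I would pin down the level-set structure across each component: using the sign of $-\Delta w$ inside $D$ and in $\mathbb{D}\setminus\overline{D}$ together with the strong maximum principle, show that near each free-boundary component $D$ is locally a super- (or sub-) level set of $w$, and organize the nested components and their constants $c_i$ into a coherent ordering. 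This ordering is precisely the coupling between the pieces and must be tracked carefully when $D$ is disconnected or multiply connected.

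With this in hand I would run the method of moving planes in an arbitrary direction, say $e_1$: for $\lambda \in (0,1)$ set $\Sigma_\lambda = \{x\in\mathbb{D}: x_1 > \lambda\}$, let $x^\lambda$ be the reflection across $T_\lambda = \{x_1 = \lambda\}$, write $D^\lambda = \{x : x^\lambda \in D\}$, and put $v_\lambda(x) = w(x^\lambda) - w(x)$. The constant forcing cancels, leaving $-\Delta v_\lambda = \pm(1_D(x^\lambda) - 1_D(x))$, which vanishes off the symmetric difference $D \triangle D^\lambda$; moreover $v_\lambda = 0$ on $T_\lambda$ and $v_\lambda \ge 0$ on $\partial\mathbb{D}\cap\Sigma_\lambda$. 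The goal is to prove $v_\lambda \ge 0$ for every $\lambda\in[0,1)$, then let $\lambda\downarrow 0$ and repeat over all directions to conclude that $w$, hence $D$, is radial; sharpness of the thresholds $0$ and $1/2$ is supplied by the bifurcating non-radial patches of de la Hoz--Hassainia--Hmidi--Mateu \cite{del2016MR3570233} for $\Omega\in(0,1/2)$.

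The main obstacle is that $1_D$ is discontinuous, so the effective nonlinearity has a jump and the classical Gidas--Ni--Nirenberg/Serrin machinery---which needs a Lipschitz nonlinearity in order to absorb $f(w_\lambda)-f(w)$ into a bounded zeroth-order coefficient---does not apply. When $\Omega \ge 1/2$ the jump points in the favorable (nonincreasing) direction: on the negativity set $\{v_\lambda < 0\}$ the structure of the previous step forces $-\Delta v_\lambda \ge 0$, and the minimum principle together with Hopf's lemma at $\partial\mathbb{D}$ and at $\partial D$ closes the argument. The genuinely delicate case is $\Omega \le 0$, where the jump points the wrong way (nondecreasing), so the naive minimum principle on $\{v_\lambda<0\}$ fails. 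Here the tailored input is needed: one must exploit that $v_\lambda$ is harmonic away from $D\triangle D^\lambda$ and carry out the comparison component-by-component along the ordering of the constants $c_i$, propagating positivity through the coupled pieces rather than all at once. I expect this coupled moving-plane scheme across the several free-boundary components---rather than any single maximum-principle application---to be the crux of the proof; a continuous Steiner symmetrization or polarization argument, being insensitive to the direction of monotonicity, is a natural fallback for the unfavorable case.
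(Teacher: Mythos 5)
Your setup is the same as the paper's: the relative stream function $u=\mathcal{G}[1_D]+\tfrac{\Omega}{2}|x|^2$ (equivalently $\mathcal{G}[1_D-2\Omega]$ up to a harmonic correction), the observation that $\Omega\le 0$ makes it weakly superharmonic and $\Omega\ge 1/2$ weakly subharmonic, and the reduction to a symmetry statement for this function. But from there the proposal has a genuine gap: you choose the moving plane method as the main engine, correctly identify that the discontinuous nonlinearity with the unfavorable (nondecreasing) monotonicity defeats the standard minimum-principle closing argument, and then do not supply the mechanism that overcomes this. The ``component-by-component comparison along the ordering of the constants $c_i$'' is named but never constructed, and it is not clear it can be: when $D$ is disconnected with several boundary components carrying distinct constants, $D$ is not globally a super- or sub-level set of $u$, so there is no single semilinear equation $-\Delta u=f(u)$ on $\mathbb{D}$ to reflect. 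A second, independent obstruction is your appeal to Hopf's lemma and Kinderlehrer--Nirenberg--Spruck regularity on $\partial D$: the theorem only assumes the boundary components are Jordan curves (which may even have positive Lebesgue measure), and the nondegeneracy of $\nabla u$ on $\partial D$ needed to bootstrap regularity is not known a priori, so the moving-plane argument cannot be closed at the free boundary.

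What you relegate to a ``natural fallback'' is in fact the paper's actual proof. The paper applies Brock's continuous Steiner symmetrization: by his criterion (Theorem 6.2 in Brock's 2000 paper), it suffices to show $\int_{\mathbb{D}}|\nabla u^t|^2-\int_{\mathbb{D}}|\nabla u|^2=o(t)$, and since CStS never increases the Dirichlet integral, only the lower bound is needed. Testing the equation with $u^t-u$ reduces this to $\int_D(u^t-u)\,\mathrm{d}x=o(t)$, which is then proved \emph{locally}, one boundary Jordan curve at a time, using only that $u$ is constant on each such curve and weakly super/subharmonic (Lemmas \ref{key1} and \ref{key2}, via the rearrangement identity for $(u-c)_+$ and Sard's theorem); no boundary regularity, level-set structure, or ordering of the $c_i$ is required. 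Local symmetry combined with weak superharmonicity then forces the annuli in Brock's decomposition to be concentric and fill the disc (Lemma \ref{key0}), giving radial symmetry. If you want to salvage your write-up, you should promote the symmetrization argument from a fallback to the main line and carry out the $o(t)$ estimates; the moving-plane branch, as written, does not close.
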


Note that in Theorem \ref{thm}, $D$ is allowed to be disconnected, and each connected component may not be simply connected. If $D$ is assumed to be connected (i.e., $D$ is a domain), then the only possible configurations of $D$ are a disc or an annulus. Therefore, we have the following corollary.

\begin{corollary}\label{cor}
  Let $D\subset \mathbb{D}$ be a domain whose boundary consists of a finite collection of mutually disjoint Jordan curves. Assume $D$ is a stationary/uniformly rotating vortex patch of \eqref{1-1}, in the sense that $D$ satisfies \eqref{1-4} for some $\Omega\in (-\infty, 0]\cup [\frac{1}{2}, \infty)$. Then $D$ must be either a disc or an annulus. More precisely, we have that
  \begin{itemize}
    \item [(1)]$D$ must be a disc centered at the origin if it is simply connected;
    \item [(2)]$D$ must be an annulus centered at the origin if it is not simply connected.
  \end{itemize}
\end{corollary}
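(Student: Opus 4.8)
The plan is to deduce the corollary from Theorem~\ref{thm} by a purely topological classification of connected, radially symmetric open sets, so that no further PDE or symmetrization input is required. By Theorem~\ref{thm}, the hypothesis $\Omega \in (-\infty,0]\cup[\tfrac12,\infty)$ already forces $D$ to be radially symmetric, i.e.\ invariant under every rotation $e^{i\theta}$ about the origin. The remaining task is to show that such a connected set, whose boundary is a finite union of mutually disjoint Jordan curves, can only be a centered disc or a centered annulus.

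First I would encode $D$ through its \emph{radial profile}. Since the map $x\mapsto |x|$ is continuous and open on $\mathbb{R}^2$ — it sends small balls around the origin onto sets of the form $[0,\epsilon)$ and is a submersion away from the origin — the image $I:=\{|x|:x\in D\}$ is an open subset of $[0,\infty)$. Because $D$ is connected (it is a domain), $I$ is moreover connected, hence an interval, and it is contained in $[0,1)$ since $D\subset\mathbb{D}$. Radial symmetry then yields the key identity $D=\{x:|x|\in I\}$: if one point at radius $r>0$ lies in $D$, the entire circle of radius $r$ does, while the origin belongs to $D$ precisely when $0\in I$. This reduces the geometry of $D$ entirely to the shape of the one-dimensional interval $I$.

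Next I would enumerate the possibilities for $I$. A nonempty connected open subset of $[0,\infty)$ is either of the form $[0,b)$ (if it contains the origin) or of the form $(a,b)$ with $0\le a<b\le 1$. In the first case $D=B_b(0)$ is a disc centered at the origin, which is simply connected, giving case (1). In the second case with $a>0$, the set $D=\{a<|x|<b\}$ is an annulus centered at the origin, which is not simply connected, giving case (2).

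The one remaining possibility, $I=(0,b)$, is exactly where the Jordan-curve hypothesis does the work: here $D=\{0<|x|<b\}$ is a punctured disc whose boundary $\partial D=\{|x|=b\}\cup\{0\}$ contains the isolated point $0$, which is not a Jordan curve. This contradicts the assumption that $\partial D$ is a finite union of mutually disjoint Jordan curves, so the case is excluded. Combining the surviving cases produces precisely the dichotomy (1)--(2). I expect the only delicate (and still modest) point to be the careful justification that $D$ coincides with the preimage $\{x:|x|\in I\}$ and that its boundary decomposes into the claimed circles; the substantive difficulty has already been absorbed into Theorem~\ref{thm}, so the corollary should follow by elementary topological bookkeeping.
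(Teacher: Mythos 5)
Your proposal is correct and follows exactly the route the paper intends: the corollary is deduced from Theorem~\ref{thm} plus the observation that a connected, rotation-invariant open set with the stated boundary structure must be a centered disc or annulus (the paper presents this as immediate in the remark preceding the corollary). Your radial-profile argument, including the exclusion of the punctured disc via the Jordan-curve hypothesis, correctly supplies the topological details the paper leaves implicit.
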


\begin{remark}
As mentioned above, the existence of non-radial uniformly rotating vortex patches with any angular velocity $\Omega\in (0, 1/2)$ was established by de la Hoz, Hassainia, Hmidi, and Mateu in \cite{del2016MR3570233}. In this sense, our rigidity result in Theorem \ref{thm} is sharp.
\end{remark}

\begin{remark}
Similar rigidity results for the whole-plane case was established by G\'{o}mez-Serrano, Park, Shi, and Yao in \cite{Gom2021MR4312192}. The boundary regularity condition for the patches therein appears to require, at a minimum, the assumption \textbf{(HD)}; see subsection 4.2 in \cite{Gom2021MR4312192}. As remarked therein, a rectifiable boundary satisfies this assumption. However, as can be seen, our boundary regularity condition for the patches is very mild. Note that a Jordan curve may have positive two-dimensional Lebesgue measure; see \cite[Chapter 8]{Sagan1994MR1299533}. It is interesting to consider whether the boundary regularity condition in \cite{Gom2021MR4312192} can be extended to Jordan curves. We believe this is the case.
\end{remark}

Our second main result concerns the radial symmetry of uniformly rotating smooth solutions.

\begin{theorem}\label{thm2}
  Let $\omega_0\in C^2(\overline{\D})$. Assume $\omega(x, t)=\omega_0(e^{-i\Omega t} x)$ is a stationary/uniformly rotating smooth solution  of \eqref{1-1}, in the sense that it satisfies \eqref{1-5} for some $\Omega\in \R$. Then $\omega_0$ must be radially symmetric if $\Omega\le \inf \omega_0/2$ or $\Omega\ge \sup \omega_0/2$.
\end{theorem}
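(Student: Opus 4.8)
The plan is to convert the transport structure into a single semilinear elliptic problem on the disc and then invoke the method of moving planes. Introduce the co-rotating stream function
\[
\psi := \mathcal{G}[\omega_0] + \frac{\Omega}{2}|x|^2 .
\]
Since $-\Delta\,\mathcal{G}[\omega_0]=\omega_0$ in $\mathbb{D}$ and $\mathcal{G}[\omega_0]=0$ on $\partial\mathbb{D}$ (the Green function has zero Dirichlet data), one has
\[
-\Delta\psi = \omega_0 - 2\Omega \quad\text{in } \mathbb{D}, \qquad \psi \equiv \tfrac{\Omega}{2} \quad\text{on } \partial\mathbb{D}.
\]
The defining relation \eqref{1-5} is equivalent to the pointwise identity $\nabla^\perp\psi\cdot\nabla\omega_0=0$ in $\mathbb{D}$; that is, the Jacobian of $(\psi,\omega_0)$ vanishes identically, so $\omega_0$ is constant along each connected streamline of $\psi$.

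First I would fix the sign of the solution using the hypothesis on $\Omega$. If $\Omega\ge\sup\omega_0/2$, then $-\Delta\psi=\omega_0-2\Omega\le 0$, so $\psi$ is subharmonic and attains its maximum on $\partial\mathbb{D}$; hence $u:=\tfrac{\Omega}{2}-\psi\ge 0$ in $\mathbb{D}$ with $u=0$ on $\partial\mathbb{D}$. The case $\Omega\le\inf\omega_0/2$ is symmetric, with $u:=\psi-\tfrac{\Omega}{2}$. In either case $u$ is a nonnegative (super)harmonic-type function vanishing on the boundary, so by the strong maximum principle either $u\equiv 0$ — whence $\omega_0\equiv 2\Omega$ is trivially radial — or $u>0$ throughout $\mathbb{D}$; moreover Hopf's lemma yields $\nabla\psi\ne 0$ on $\partial\mathbb{D}$, so there are no boundary critical points.

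Next I would realize the vorticity as a function of the stream function. Using $\nabla^\perp\psi\cdot\nabla\omega_0=0$ one writes $\omega_0=F(\psi)$, locally at noncritical points and, after checking consistency of the value of $\omega_0$ across the components of each level set, globally; thus $u$ solves $-\Delta u=\tilde g(u)$ in $\mathbb{D}$ with $u>0$ and $u|_{\partial\mathbb{D}}=0$. If $\tilde g$ is Lipschitz, the Gidas--Ni--Nirenberg moving-plane scheme applies verbatim: comparing $u$ with its reflection $u^\lambda$ across the hyperplanes $\{x_1=\lambda\}$, the difference $w_\lambda=u-u^\lambda$ satisfies a linear equation $-\Delta w_\lambda=c_\lambda(x)w_\lambda$ with $c_\lambda$ bounded, and sliding $\lambda$ from the boundary to the center (the maximum principle in narrow caps to start the process, Hopf's lemma to continue it) forces symmetry about every diameter, hence radial symmetry and radial monotonicity of $u$, and therefore of $\psi$ and of $\omega_0$.

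The main obstacle is the regularity of the profile $F$, equivalently of $\tilde g$. Away from the critical set of $\psi$ the implicit function theorem gives $F\in C^1$ with $|F'(\psi)|=|\nabla\omega_0|/|\nabla\psi|$, but this quotient can a priori blow up as one approaches an interior critical point of $\psi$ — and the sign condition forces at least one such point (an interior extremum). The crux is therefore to show that $F$ extends to a (locally) Lipschitz function across the critical values, so that the coefficient $c_\lambda$ in the comparison remains bounded; equivalently, to establish the uniform bound $|\omega_0(x)-\omega_0(x^\lambda)|\le L\,|u(x)-u(x^\lambda)|$. I would attack this by exploiting the $C^2$ regularity of $\omega_0$ to control $\nabla\omega_0$ near the critical set — whose image is Lebesgue-negligible by Sard's theorem — thereby reducing the comparison to the regular part where it is standard, and absorbing any residual degeneracy at an isolated interior maximum by running the maximum principle on the complement of a small ball about it and letting the radius tend to zero.
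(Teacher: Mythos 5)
Your reduction to a single semilinear problem $-\Delta u=\tilde g(u)$ on all of $\mathbb{D}$ is where the argument breaks, and the parenthetical ``after checking consistency of the value of $\omega_0$ across the components of each level set'' conceals rather than closes the gap. The hypothesis \eqref{1-5} only forces $\psi$ to be constant on each \emph{connected component} of a regular level set of $\omega_0$, with constants that may differ from component to component; dually, $\omega_0$ is constant along each connected streamline of $\psi$, but two distinct connected components of the same level set $\{\psi=c\}$ may carry different values of $\omega_0$. Hence the profile $F$ with $\omega_0=F(\psi)$ is in general multivalued, and no global equation $-\Delta u=\tilde g(u)$ is available. This is precisely the obstruction the paper identifies (``it seems challenging to prove directly that the stream function satisfies a single semi-linear elliptic equation over the entire domain $\mathbb{D}$'') and circumvents by a different route: Brock's continuous Steiner symmetrization, which only needs the local, slice-by-slice structure. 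Even granting a single-valued $F$, your second step is also not secured: the moving-plane comparison requires the uniform bound $|\omega_0(x)-\omega_0(x^\lambda)|\le L\,|u(x)-u(x^\lambda)|$, i.e.\ a locally Lipschitz $\tilde g$, and the quotient $|\nabla\omega_0|/|\nabla\psi|$ can genuinely blow up on the critical set of $\psi$, which need not be a single isolated extremum (it can contain plateaus or annular components). Sard's theorem controls the measure of the critical \emph{values}, not the size of this quotient near critical \emph{points}, and excising a small ball only works if the degeneracy is isolated, which you have not shown.

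The paper's actual proof avoids both issues: it tests $-\Delta u=\omega_0-2\Omega$ against $u^t-u$ (where $u^t$ is the continuous Steiner symmetrization), approximates $\omega_0$ by step functions $w_k=\sum_i\alpha_i 1_{D_i}$ whose level domains are bounded by regular level sets of $\omega_0$, applies Lemmas \ref{key1} and \ref{key2} on each $D_i$ to get $\int_{\mathbb{D}}\omega_0(u^t-u)\,\d x=o(t)$, concludes local symmetry from Proposition \ref{pro2}, and finally upgrades local symmetry to radial symmetry using the weak super/subharmonicity of $u$ via Lemma \ref{key0}. The sign condition on $\Omega$ enters only through that last superharmonicity step, not through positivity of a nonlinearity as in your scheme. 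To salvage a moving-plane proof you would need to establish both the global single-valuedness and the Lipschitz regularity of $F$, neither of which follows from the hypotheses.
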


\begin{remark}
By a suitable modification of the proof of Theorem \ref{thm}, we can also generalize the result presented in Theorem \ref{thm} to encompass stationary and uniformly rotating vortex patches $\omega$ that exhibit multiple scales, with the allowance for a change in sign within these patches; see Remark \ref{re5} in Section \ref{s5}. The new thresholds of angular velocity are also given by $\inf \omega/2$ and $\sup \omega/2$, respectively. We believe that these bounds are sharp, although we have not yet found a complete result on the existence of non-radial solutions in the literature.
\end{remark}

\subsection{Comments on related works}
There is extensive literature on the numerical and analytical structures of stationary/uniformly rotating solutions. Below, we briefly review some related works in both the entire plane and disc cases. We will primarily focus on the rigidity (only trivial solutions exist), with some flexibility (non-trivial solutions exist) addressed in passing. Rigidity properties of relative equilibria (which do not change shape as time evolves) in an ideal fluid are an interesting topic in fluid mechanics. The core issue is to identify sufficient rigidity conditions under which the fluid motion must assume a simple geometric configuration.

\subsubsection*{\textbf{The patch setting}}
Let us begin by the patch setting. In the literature, uniformly rotating vortex patches are commonly referred to as $V$-states. The first example of explicit non-trivial $V$-states was discovered by Kirchhoff \cite{Kirchhoff}, which is an ellipse of semi-axes $a$ and $b$ is subject to a perpetual rotation with the angular velocity $ab/(a+b)^2$. In the 1970s, Deem and Zabusky \cite{Deem1978} gave some numerical observations of the existence of more $V$-states with $m$-fold symmetry for $m\ge 2$. A few years later, Burbea gave in \cite{Bur1982MR646163} gave an analytical proof of their existence based on a conformal mapping parametrization and local bifurcation theory. Burbea’s approach was revisited in \cite{Hmi2013MR3054601} with more details and explanations. Recently, Burbea's branches of solutions were extended to global ones \cite{Has2020MR4156612}. The existence of doubly connected $V$-states with $m$-fold symmetry bifurcating from annuli was established in \cite{del2016MR3507551, Hmi2016MR3545942, wang2024degeneratebifurcationstwofolddoublyconnected}. The existence of disconnected non-trivial $V$-states has also been established by various methods; see, e.g., \cite{Gar2021MR4284365, Has2022MR4510650, Hmi2017MR3607460, Tur85MR783583} and the references therein.

All of the results mentioned above are established in the context of the entire plane. The existence of non-trivial $V$-states in discs appears to be relatively few. Note that due to boundary effects, the ellipses are no longer rotating vortex patches in the disc. As mentioned above, de la Hoz-Hassainia-Hmidi-Mateu \cite{del2016MR3570233} proved the existence of $m$-fold $V$-states in the disc, both for the simply connected and doubly connected cases, using bifurcation theory. It follows that non-radial $V$-states can exist for every $\Omega\in (0, 1/2)$. Some new phenomena, distinct from those in the entire plane, were also exposed in \cite{del2016MR3570233}. For example, curves of $1$-fold $V$-states exist close to Rankine vortices, which are not necessarily centered at the origin. Furthermore, the multiplicity of solutions with identical angular velocity also arises in the disc case. These results are due to the interaction between the patch and the rigid boundary $\partial \mathbb{D}$. Recently, Cao-Wan-Wang-Zhan \cite{Cao2021MR4191331} also constructed a family of non-trivial $V$-states with small positive angular velocity, which may not be centered at the origin, using the variational method.

It should be pointed out that all of the aforementioned non-trivial $V$-states have an angular velocity in the interval $(0, 1/2)$.

Let us first briefly review some rigidity results concerning $V$-states in the entire plane case. For stationary patches ($\Omega=0$), Fraenkel \cite[Chapter 4]{Fra2000MR1751289} showed that if $D$ possesses the \emph{same} constant $C$ on the whole $\partial D$, then $D$ must be a disc; see also \cite{Lu2012MR2890967}. As a direct consequence, every simply connected stationary patch must be a disc. Hmidi \cite{Hmi2015MR3427065} proved that any simply connected $V$-state with angular velocity $\Omega<0$ or $\Omega=1/2$ must be a disc, under an additional convexity assumption. Recently, G\'{o}mez-Serrano, Park, Shi and Yao \cite{Gom2021MR4312192} established a complete rigidity result, which asserts that every $V$-state with angular velocity $\Omega\le0$ or $\Omega\ge1/2$ must be radial. In addition, they also study the radial symmetry properties of stationary and uniformly rotating smooth solutions.

For $V$-states in a disc, symmetry results appear to be extremely rare. It is known that every simply connected stationary patch must be a disc. Indeed, in this case, the stream function $\mathcal{G}[1_D]$ solves a semilinear elliptic equation with a monotone nonlinearity, and the desired symmetry follows by applying the moving plane method developed in \cite{Ser1971MR333220} (see also \cite{Fra2000MR1751289}). Recently, Wang and Zuo \cite{Wang2021MR4172850} established symmetry results for $V$-states under a self-referential condition on their angular velocity. More precisely, they showed that every simply connected $V$-state $D$ must be a disc if its angular velocity $\Omega\le -\frac{2\ell^2}{(1-\ell^2)^2}$ or $\Omega\ge \max\left\{\frac{1}{2}, \frac{2\ell^2}{(1-\ell^2)^2}\right\}$, where $\ell:=\sup_{x\in D}|x|$. Note that the threshold deteriorates significantly as the patch approaches the rigid boundary. The main idea of their proof is borrowed from \cite{Gom2021MR4312192}. However, due to boundary effects, a related estimate involving the regular part $h(x, y)$ of the Green's function arises, which appears to be difficult to handle; see Lemma 3.4 in \cite{Wang2021MR4172850} for attempts at estimation. Whether a $V$-state with angular velocity $\Omega\not\in(0, 1/2)$ in a disc must be radial still remained an open question. The main motivation of the present paper is to completely resolve this open question.

\subsubsection*{\textbf{The smooth setting}}

We now turn to the smooth setting. The literature in this area is extensive; we therefore highlight only a few relevant works. For a detailed bibliography on the rigidity and flexibility properties of stationary/uniformly rotating Euler flows
in two dimensions and their applications, we refer to the recent papers \cite{Gom2024, Gom2021MR4312192, Hamel2023} and the references therein.

In the entire plane, any radial smooth vorticity with compact support automatically corresponds to a stationary solution. For the radial vorticity $\omega$ with zero-average, $\int_{\R^2}\omega(x)\,\d x=0$, its velocity vanishes outside the support of $\omega$. The stationary solutions that are obtained by patching together such radial solutions with disjoint supports are referred to as locally radial. Recently, compactly supported stationary Euler flows in two dimensions that are not locally radial were constructed in \cite{Enci2024, Enci2023, Gom2024}. Note that, as an immediate consequence of the divergence theorem, the vorticity of a compactly supported velocity field must change sign.

Regarding uniformly rotating smooth solutions ($\Omega \neq 0$), Castro-C\'{o}rdoba-G\'{o}mez-Serrano \cite{Cas2019MR3900813} desingularized a vortex patch to construct an $m$-fold rotating solution with $\Omega \sim \frac{m-1}{2m} > 0$ for $m \geq 2$. Garc\'{\i}a-Hmidi-Soler \cite{Gar2020MR4134155} considered the construction of $V$-states bifurcating from other radial profiles, including Gaussians and piecewise quadratic functions.

Shvydkoy-Luo \cite{Luo2015MR3359160, Luo2017MR3620896} classified the set of stationary smooth solutions of the form $\mathbf{v}=\nabla^\perp (r^\gamma f(\theta))$, where $(r, \theta)$ are polar coordinates. Hamel-Nadirashvili \cite{Ham2019MR3951689} proved that in the whole plane, bounded steady flows with no stagnation points are parallel shear flows. G\'{o}mez-Serrano-Park-Shi-Yao \cite{Gom2021MR4312192} proved that any uniformly rotating smooth solutions with angular velocity $\Omega\le 0$ must be radial. Recently, Gui-Xie-Xu \cite{Gui2024} provided a classification of stationary Euler flows in terms of the set of flow angles.

All the rigidity results mentioned above are established in the context of the entire plane (or strip, or half-plane). We now turn to the disc case. Hamel-Nadirashvili \cite{Hamel2023MR4556785} proved that stationary Euler flows without stagnation points in punctured discs (with certain conditions at the center) must be circular flows. Recently, Wang-Zhan \cite{Wang2023} showed that steady Euler flows in a disc with exactly one interior stagnation point and tangential boundary conditions must be circular flows. Besides these two results, we are not aware of any other direct rigidity results for stationary or uniformly rotating smooth solutions of \eqref{1-1} in a disc. Our Theorem \ref{thm2} provides a rigidity result for stationary or uniformly rotating smooth Euler flows in a disc from the perspective of vorticity.

\bigskip

\bigskip

\subsection{Idea of the proof}
Our strategy is to focus on the corresponding stream function of the flow and the elliptic problem it satisfies. The problem reduces to proving the symmetry properties of the positive solutions to the associated elliptic problem. In the patch setting, the main difficulty is that it seems challenging to prove directly that the stream function satisfies a single semi-linear elliptic equation over the entire domain $\D$. Fortunately, the stream function of the rotating solution satisfies a single semi-linear elliptic equation, at least slice by slice. We use the continuous Steiner symmetrization method, developed by Brock \cite{Bro1995MR1330619, Broc2000MR1758811, Bro2016MR3509374}, to derive some local symmetry properties of the solution. The key observation is that the implementation of this method can be localized, thus overcoming the difficulty of non-holistically satisfying semi-linear elliptic equations. The condition on the angular velocity ensures that the stream function is either weakly superharmonic or subharmonic. The combination of local symmetry and the superharmonic/subharmonic property ensures global radial symmetry. This completes the proof of the patch case. After completing the proof for the patch case, we extend the same approach to the smooth case, thereby concluding the proof.

\subsection{Organization of the paper}
The rest of the paper is organized as follows: In Section \ref{s2}, we present some preliminary results that will be used in the sequel. Section \ref{s3} is devoted to proving the symmetry results for stationary vortex patches. The proof is divided into several cases, which are considered individually, with the specific details presented in full. Sections \ref{s4} and \ref{s5} address the radial symmetry of uniformly rotating vortex patches with angular velocity $\Omega<0$ and $\Omega\ge 1/2$, respectively. The proof for these cases is essentially similar to the stationary case and even simpler, so it is only presented in outline. In Section \ref{s6}, we further establish the radial symmetry of uniformly rotating smooth solutions.

\subsection{Notation}Throughout the paper, we use the following notations.
\begin{itemize}
  \item Let $B_r(x)$ and $Q_r(x)$ denote the open and closed discs in $\mathbb{R}^2$ with center $x$ and radius $r>0$, respectively. For convenience, we write $B_r:=B_r(0)$ and $Q_r:=Q_r(0)$.
  \item The non-negative part of a function $f$ is denoted by $f_+$, i.e., $f_+=\max\{f, 0\}$.
  \item For a set $D$, we use $1_D$ to denote its indicator function.
  \item For a Jordan curve $\Gamma$, let $\operatorname{int}(\Gamma)$ denote its interior, defined as the bounded connected component of $\mathbb{R}^2$ separated by $\Gamma$. By the Jordan curve theorem, $\operatorname{int}(\Gamma)$ is open and simply connected.
  \item We say that two disjoint simple closed curves $\Gamma_1$ and $\Gamma_2$ are nested if $\Gamma_1\subset \text{int}(\Gamma_2)$ or vice versa. We say that two domains $D_1$, $D_2$ are nested if one is contained
in a hole of the other one.
 \item For a bounded domain $D\subset \R^2$, we denote by $\partial_{\text{out}}D$ its outer boundary. And if $D$ is doubly connected, then we denote by $\partial_{\text{in}}D$ its inner boundary.
 \item We adhere to the convention of denoting by $C$ a constant independent of the relevant parameters under consideration. The constant may change its actual value at different occurrences.
\end{itemize}

\section{Preliminary results}\label{s2}

In this section, we collect several preliminary results which will be frequently used in the proofs of main results.

\subsection{The continuous Steiner symmetrization}
As mentioned above, the key tool in our proof is the continuous Steiner symmetrization method. For the convenience of the reader we first recall the definition of the continuous Steiner symmetrization (CStS); see \cite{Bro1995MR1330619, Broc2000MR1758811, Bro2016MR3509374, Sol2020MR4175494}. We will follow the presentation in \cite{Bro2016MR3509374}. Below is the form we will use; the original result applies to more general cases.

We start with some notation. Let $\mathcal{L}^N$ denote $N$-dimensional Lebesgue measure. By $\mathcal{M}(\R^2)$ we denote the family of Lebesgue measurable sets in $\R^2$ with finite measure. For a function $u:\R^2\to \R$, let $\{u>a\}$ and $\{b\ge u>a\}$ denote the sets $\left\{x\in \R^2: u(x)>a \right\}$ and $\left\{x\in \R^2: b\ge u(x)>a \right\}$, respectively, ($a, b\in \R$, $a<b$). Let $\mathcal{S}(\R^2)$ denote the class of real, nonnegative measurable functions $u$ satisfying
\begin{equation*}
  \LL^2(\{u>c\})<+\infty,\ \ \forall\, c>\inf u.
\end{equation*}

The following is the definition of classical Steiner symmetrization; see, for instance, \cite{Bro2007MR2569330, Kaw1985MR810619, Lie2001MR1817225}.
\begin{definition}[Steiner symmetrization]
\ \ \
  \begin{itemize}
    \item [(i)]For any set $M\in \M(\R)$, let
    \begin{equation*}
      M^*:=\left(-\frac{1}{2}\LL^1(M),\  \frac{1}{2}\LL^1(M)\right).
    \end{equation*}
    \item [(ii)]Let $M\in \M(\R^2)$. For every $x_2\in \R$, let
    \begin{equation*}
      M(x_2):=\left\{x_1\in \R: (x_1, x_2)\in M \right\}.
    \end{equation*}
    The set
    \begin{equation*}
      M^*:=\left\{ x=(x_1, x_2): x_1\in \left(M(x_2) \right)^*, x_2\in \R\right\}.
    \end{equation*}
    is called the Steiner symmetrization of $M$ (with respect to $x_1$).
    \item [(iii)]If $u\in \s$, then the function
    \begin{equation*}
      u^*(x):=\begin{cases}
                \sup \left\{c>\inf u: x\in \left\{u>c \right\}^*\right\}, & \mbox{if }\  x\in \bigcup_{c>\inf u} \left\{u>c \right\}^*, \\
                \inf u, & \mbox{if }\  x\not\in \bigcup_{c>\inf u} \left\{u>c \right\}^*,
              \end{cases}
    \end{equation*}
   is called the Steiner symmetrization of $u$ (with respect to $x_1$).
  \end{itemize}
\end{definition}

\begin{definition}[Continuous symmetrization of sets in $\M(\R)$]
  A family of set transformations
  \begin{equation*}
    \T_t:\  \M(\R)\to \M(\R),\ \ \  0\le t\le +\infty,
  \end{equation*}
is called a continuous symmetrization on $\R$ if it satisfies the following properties: ($M, E\in \M(\R)$, $0\le s, t\le +\infty$)
\begin{itemize}
  \item [(i)]Equimeasurability property:\, $\LL^1(\T_t(M))=\LL^1(M)$,

    \smallskip
  \item [(ii)]Monotonicity property:\, If $M\subset E$, then $\T_t(M)\subset \T_t(E)$,

    \smallskip
  \item [(iii)]Semigroup property:\, $\T_t(\T_s(M))=\T_{s+t}(M)$,

    \smallskip
  \item [(iv)]Interval property:\, If $M$ is an interval $[x-R,\ x+R]$, ($x\in \R$, $R>0$), then $\T_t(M):=[xe^{-t}-R,\ xe^{-t}+R]$,

    \smallskip
  \item [(v)]Open/compact set property: If $M$ is open/compact, then $\T_t(M)$ is open/compact.
\end{itemize}
\end{definition}

The existence and uniqueness of the family $\T_t$, $0\le t \le +\infty$, can be found in \cite [Theorem 2.1]{Broc2000MR1758811}.

\begin{definition}[Continuous Steiner symmetrization (CStS)]\label{csts}
  \ \ \
  \begin{itemize}
    \item [(i)]Let $M\in \M(\R^2)$. The family of sets
    \begin{equation*}
      \T_t(M):=\left\{x=(x_1, x_2): x_1\in \T_t(M(x_2)), x_2\in \R \right\},\ \ \ 0\le t\le +\infty,
    \end{equation*}
    is called the continuous Steiner symmetrization (CStS) of $M$ (with respect to $x_1$).
    \item [(ii)]Let $u\in \s$. The family of functions $\T_t(u)$, $0\le t \le +\infty$, defined by
    \begin{equation*}
      \T_t(u)(x):=\begin{cases}
                \sup \left\{c>\inf u: x\in \T_t\left(\left\{u>c \right\}\right)\right\}, & \mbox{if }\ x\in \bigcup_{c>\inf u} \T_t\left(\left\{u>c \right\}\right), \\
                \inf u, & \mbox{if }\ x\not\in \bigcup_{c>\inf u} \T_t\left(\left\{u>c \right\}\right),
              \end{cases}
    \end{equation*}
    is called CStS of $u$ (with respect to $x_1$).
  \end{itemize}
\end{definition}
For convenience, we will henceforth simply write $M^t$ and $u^t$ for the sets $\T_t(M)$, respectively for the functions $\T_t(u)$, ($t\in [0,+\infty]$). Note that if $u, v\in \s$ are two continuous functions with compact support satisfying $\text{supp}(u) \cap \text{supp}(v)=\varnothing$, then $(u+v)^t=u^t+v^t$ for all sufficiently small $t$.

Below we summarize basic properties of CStS, which have been proved by Brock in \cite{Bro1995MR1330619, Broc2000MR1758811}.

\begin{proposition}\label{pro0}
  Let $M\in \M(\R^2)$, $u,v\in \s,\, t\in [0,+\infty]$. Then
  \begin{itemize}
    \item [(1)]Equimeasurability:
    \begin{equation*}
      \LL^2(M)=\LL^2(M^t)\ \ \ \text{and}\ \ \ \left\{u^t>c \right\}=\left\{u>c \right\}^t,\  \forall\, c>\inf u.
    \end{equation*}

    \smallskip
    \item [(2)]Monotonicity: If $u\le v$, then $u^t\le v^t$.

    \smallskip
    \item [(3)]Commutativity: If $\phi: [0, +\infty)\to [0, +\infty)$ is bounded and nondecreasing with $\phi(0)=0$, then
    \begin{equation*}
      \phi(u^t)=[\phi(u)]^t.
    \end{equation*}

    \smallskip
    \item [(4)]Homotopy:
    \begin{equation*}
      M^0=M,\ \ \  u^0=u,\ \ \ M^\infty =M^*,\ \ \ u^\infty=u^*.
    \end{equation*}
Furthermore, from the construction of the CStS it follows that, if $M=M^*$ or $u=u^*$, then $M^t=M$, respectively, $u=u^t$ for all $t\in [0, +\infty]$.

    \smallskip
        \item [(5)]Cavalieri's pinciple: If $F$ is continuous and if $F(u)\in L^1(\R^2)$ then
        \begin{equation*}
          \int_{\R^2} F(u)\,\d x= \int_{\R^2} F(u^t)\,\d x.
        \end{equation*}

    \smallskip
    \item [(6)]Continuity in $L^p$: If $t_n\to t $ as $n\to +\infty$ and $u\in L^p(\R^2)$ for some $p\in [1, +\infty)$, then
    \begin{equation*}
      \lim_{n\to +\infty}\|u^{t_n}-u^t\|_p=0.
    \end{equation*}

    \smallskip
        \item [(7)]Nonexpansivity in $L^p$: If $u, v\in L^p(\R^2)$ for some $p\in [1, +\infty)$, then
        \begin{equation*}
          \|u^{t}-v^t\|_p\le \|u-v\|_p.
        \end{equation*}

          \smallskip
        \item [(8)]Hardy-Littlewood inequality: If $u, v\in L^2(\R^2)$ then
        \begin{equation*}
           \int_{\R^2} u^t v^t\,\d x\ge \int_{\R^2} u v\,\d x.
        \end{equation*}

         \smallskip
        \item [(9)]If $u$ is Lipschitz continuous with Lipschitz constant $L$, then $u^t$ is Lipschitz continuous, too, with Lipschitz constant less than or equal to $L$.

         \smallskip
        \item [(10)] If $\text{supp}\, (u)\subset B_R$ for some $R>0$, then we also have $\text{supp}\, (u^t)\subset B_R$. If, in addition, $u$ is Lipschitz continuous with Lipschitz constant $L$, then we have
            \begin{equation*}
              |u^t(x)-u(x)|\le LR\, t,\ \ \ \forall\, x\in B_R.
            \end{equation*}
           Furthermore, there holds
           \begin{equation*}
             \int_{B_R}G(|\nabla u^t|)\,\d x \le  \int_{B_R}G(|\nabla u|)\,\d x,
           \end{equation*}
           for every convex function $G: [0, +\infty) \to [0, +\infty)$ with $G(0)=0$.
  \end{itemize}
\end{proposition}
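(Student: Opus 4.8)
The plan is to build all ten properties from two ingredients: the defining axioms (i)--(v) of the one-dimensional continuous symmetrization $\T_t$ on $\M(\R)$, and the superlevel-set (layer-cake) representation underlying Definition \ref{csts}. Since $u^t$ is reconstructed from the symmetrized slices of its superlevel sets, the measure-theoretic properties (1)--(5) reduce to slice-wise statements joined by Fubini's theorem, whereas the analytic properties (6)--(10) rest on two genuinely nontrivial one-dimensional estimates of Brock, which I would cite rather than reprove.

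For the elementary block I would argue as follows. In (1), the identity $\{u^t>c\}=\{u>c\}^t$ for $c>\inf u$ is read off from Definition \ref{csts} using that $c\mapsto\{u>c\}$ is nested and decreasing, so the supremum defining $u^t$ is compatible with the monotonicity axiom (ii) for $\T_t$ (the only delicacy, the strict-versus-nonstrict supremum, is handled by right-continuity of the superlevel-set family); the set equimeasurability $\LL^2(M^t)=\LL^2(M)$ then follows by integrating the slice-wise identity $\LL^1(\T_t(M(x_2)))=\LL^1(M(x_2))$ in $x_2$, and the function statement is immediate. Property (2) comes from applying the set-monotonicity axiom to the nested superlevel sets $\{u>c\}\subset\{v>c\}$ and re-expressing via (1). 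For (3), when $\phi$ is nondecreasing with $\phi(0)=0$ the superlevel sets of $\phi(u)$ coincide with those of $u$ up to relabelling of the level, so $\phi(u^t)$ and $[\phi(u)]^t$ share the same superlevel sets. Property (4) is the endpoint behaviour $t=0,\infty$ of $\T_t$, together with the fact---read off from the semigroup and interval axioms---that an already symmetric set or function is fixed by $\T_t$. Finally (5) is Cavalieri's principle: $u$ and $u^t$ have identical distribution functions by (1), whence $\int_{\R^2} F(u)\,\d x=\int_{\R^2} F(u^t)\,\d x$.

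The analytic properties I would reduce to Brock's two foundational one-dimensional facts. The first is the $L^1$-nonexpansivity for plane sets, $\LL^2(M^t\triangle E^t)\le \LL^2(M\triangle E)$, which Brock proves by approximating slices by finite unions of intervals and tracking the explicit endpoint motion dictated by the interval axiom $\T_t([x-R,x+R])=[xe^{-t}-R,xe^{-t}+R]$. From this, (7) follows via the layer-cake identity $u-v=\int_{\R}(1_{\{u>c\}}-1_{\{v>c\}})\,\d c$ together with (1) and Minkowski's integral inequality; (6) then follows by combining (7) with the continuity of $t\mapsto u^t$ for step functions (immediate from the interval axiom) and a density argument in $L^p$; and (8), the Hardy--Littlewood inequality, I would derive from the set statement $\LL^2(M^t\cap E^t)\ge\LL^2(M\cap E)$ and the layer-cake representation. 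The second foundational fact is the P\'olya--Szeg\H{o}-type gradient inequality, proved by Brock through the co-area formula and a one-dimensional perimeter estimate for $\T_t$; this yields the gradient bound in (10). The remaining parts of (9) and (10) are softer: the support inclusion $\operatorname{supp}(u^t)\subset B_R$ and the displacement bound $|u^t(x)-u(x)|\le LRt$ follow from the interval axiom, since the centre of each symmetrized interval moves from $x_1$ to $x_1e^{-t}$, a displacement at most $|x_1|(1-e^{-t})\le Rt$, while the Lipschitz estimate (9) quantifies how the interval axiom contracts the modulus of continuity.

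I expect the only genuinely hard steps to be the one-dimensional $L^1$-nonexpansivity and the P\'olya--Szeg\H{o} gradient inequality for CStS; once these are available, every remaining property is bookkeeping through slicing, Fubini, and the layer-cake decomposition. Since both hard estimates are precisely the content of the papers of Brock cited in the statement, my proof would invoke them directly and spend its effort verifying the superlevel-set reductions, justifying the Fubini interchanges, and checking the measurability needed to make the layer-cake integrals meaningful.
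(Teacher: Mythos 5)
The paper offers no proof of this proposition: it is stated as a summary of known facts and attributed wholesale to Brock's papers \cite{Bro1995MR1330619, Broc2000MR1758811}. Your outline is therefore not ``the same approach'' so much as a reconstruction of Brock's own development, and as such it is largely faithful: properties (1)--(5) do reduce to slice-wise applications of the axioms for $\T_t$ plus the layer-cake representation and Fubini, (8) does follow from the set inequality $\LL^2(M^t\cap E^t)\ge\LL^2(M\cap E)$, and the genuinely hard inputs are exactly the two you isolate (the one-dimensional nonexpansivity estimates and the P\'olya--Szeg\H{o}-type gradient inequality), which it is legitimate to cite.

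One step in your reduction does not close as written: the derivation of (7) for general $p\in[1,\infty)$ from the layer-cake identity and Minkowski's integral inequality. That route gives
\begin{equation*}
  \|u^t-v^t\|_p\ \le\ \int_0^\infty \big\|1_{\{u^t>c\}}-1_{\{v^t>c\}}\big\|_p\,\d c
  \ \le\ \int_0^\infty \LL^2\big(\{u>c\}\,\triangle\,\{v>c\}\big)^{1/p}\,\d c,
\end{equation*}
and the right-hand side is \emph{not} bounded by $\|u-v\|_p$ when $p>1$ (it equals $\|u-v\|_1$-type information raised to the power $1/p$ levelwise, which can be much larger). The layer-cake argument only yields the case $p=1$. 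For general $p$ one needs the convex-integrand inequality $\int J(u^t-v^t)\,\d x\le\int J(u-v)\,\d x$ for convex $J$ with $J(0)=0$ (the Crowe--Zwiebel--Rosenbloom-type theorem, which is what Brock actually establishes for the CStS), applied with $J(s)=|s|^p$; this is proved from the set-theoretic two-point inequalities, not from Minkowski. Since (6) in your plan is deduced from (7) by density, this gap propagates there as well; with the convex-integrand inequality in hand both recover. The remaining heuristics (e.g.\ the displacement bound $|x_1|(1-e^{-t})\le Rt$ for the interval axiom) are correct in spirit but should be backed by Brock's construction, since $\T_t$ of a general open slice is not simply a family of independently translating intervals (intervals can merge).
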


\subsection{Local symmmetry}
Following Brock \cite{Broc2000MR1758811}, we introduce a local version of symmetry for a function $u\in \s$.
\begin{definition}[Local symmetry in a certain direction]\label{def2-1}
Let $u\in \s$ and continuously differentiable on $\{x: 0<u(x)<\sup_{\R^2}u \}$, and suppose that this last set is open. Further, suppose that $u$ has the following property. If $y=(y_1, y_2)\in \R^2$ with
  \begin{equation*}
    0<u(y)<\sup_{\R^2} u,\ \ \ \partial_1 u(y)>0,
  \end{equation*}
  and $\tilde{ y}$ is the (unique) point satisfying
  \begin{equation*}
    \tilde{y}=(\tilde{y}_1, y_2),\ \ \ \tilde{y}_1>y_1,\ \ \ u(y)=u(\tilde{y})< u(s, y_2),\ \ \forall\, s\in (y_1,\tilde{y}_1),
  \end{equation*}
  then
  \begin{equation*}
  \partial_2 u(y)=\partial_2u(\tilde{y}),\ \ \ \partial_1u(y)=-\partial_1 u (\tilde{y}).
  \end{equation*}
  Then $u$ is called \emph{locally symmetric in the direction $x_1$}.
\end{definition}

Suppose that for arbitrary rotations $x\mapsto y=(y_1, y_2)$ of the coordinate system, $u$ is locally symmetric in the direction $y_1$. Then $u$ is said to be \emph{locally symmetric}. In other words, a function $u$ is said to be locally symmetric if it is locally symmetric in \emph{every} direction.

By \cite[Theorem 6.1]{Broc2000MR1758811}, locally symmetric functions exhibit a high degree of radial symmetry. Roughly speaking, it is radially symmetric and radially decreasing in some annuli (probably infinitely many) and flat elsewhere.
\begin{proposition}[ \cite{Broc2000MR1758811}, Theorem 6.1]\label{pro1}
  Let $u$ be locally symmetric, then we have the following decomposition:
    \begin{itemize}
    \item [(1)]$\displaystyle \{x: 0<u(x)<\sup_{\R^2}u \}=\bigcup_{k\in K} A_k \cup \{x\in V: \nabla u(x)=0\}$, \text{where}
    \begin{equation*}
      A_k=B_{R_k}(z_k)\backslash {Q_{r_k}(z_k)},\ \ \ z_k\in \R^2,\ \ \ 0\le r_k<R_k;
    \end{equation*}
     \item[(2)]$K$ is a countable set;
        \smallskip
    \item [(3)] the sets $A_k$ are pairwise disjoint;
    \smallskip
     \item [(4)]$u(x)=U_k(|x-z_k|)$, $x\in A_k$, where $U_k\in C^1([r_k, R_k])$;
         \smallskip
    \item [(5)]$U'_k(r)<0$ for $r\in (r_k, R_k)$;
        \smallskip
    \item [(6)]$u(x)\ge U_k(r_k),\ \forall\,x\in Q_{r_k}(z_k)$, $k\in K$.
  \end{itemize}
\end{proposition}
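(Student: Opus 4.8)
The plan is to study the open set $V := \{x\in\R^2 : 0<u(x)<\sup_{\R^2}u\}$, on which $u$ is $C^1$, and to split it into its regular part $V' := \{x\in V : \nabla u(x)\neq 0\}$ and the critical set $\{x\in V : \nabla u(x)=0\}$. The assertion (1) is then the statement that $V'$, modulo the critical set, is an at most countable disjoint union of open annuli $A_k$ on which $u$ is radial and strictly radially decreasing. Thus everything reduces to one core claim — \emph{near every regular point $u$ is locally radial} — followed by the bookkeeping needed to assemble the local pieces and read off (2)--(6).

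\emph{Core step (local radiality).} Fix $x_0\in V'$, set $c_0=u(x_0)$, and note that by the implicit function theorem the level set $\Sigma=\{u=c_0\}$ is a $C^1$ arc near $x_0$ with normal $\nabla u$. The first thing I would extract from Definition \ref{def2-1} is a clean reflection statement: if $p,q$ are nearby points of $\Sigma$ with $u>c_0$ on the open chord $(p,q)$, then applying local symmetry in the direction $e=(q-p)/|q-p|$ (taking $y=p$, $\tilde y=q$) shows that $\nabla u(p)$ and $\nabla u(q)$ are mirror images across the perpendicular bisector of $[p,q]$; in particular the two normal lines to $\Sigma$ meet on that bisector. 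I would then show that demanding this for all admissible chords forces every normal line of $\Sigma$ to pass through a single point $z=z(x_0)$ — the intersection of the perpendicular bisectors of two independent chords furnishes a candidate center, and the reflection identity propagates it to all other chords. Hence $\Sigma$ is a circular arc centered at $z$, and since the same holds for every nearby level $\{u=c\}$ with the center pinned by the direction of $\nabla u$ (which must point along $x-z$), $u$ agrees near $x_0$ with a $C^1$ radial profile $U(|x-z|)$. Nonvanishing of $\nabla u$ gives $U'\neq 0$; the sign $U'<0$ is forced because the reflection identity both flips the normal component and \emph{preserves the gradient magnitude} at points paired across a segment on which $u$ is larger in the middle, a ``bump-toward-the-center'' configuration compatible only with a profile increasing toward $z$.

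\emph{Globalization.} Two overlapping local radial patches must share the same center, since at a common regular point the gradient direction fixes the radial ray through $z$ and two such rays meet only at $z$; hence $x_0\mapsto z(x_0)$ is locally constant and therefore constant on each connected component of $V'$. Grouping regular points by their common center yields the maximal radial regions, each of the form $A_k=B_{R_k}(z_k)\setminus Q_{r_k}(z_k)$ with $u=U_k(|x-z_k|)$, $U_k\in C^1([r_k,R_k])$ and $U_k'<0$, which gives (1), (3), (4), (5). Countability (2) is immediate since the $A_k$ are pairwise disjoint nonempty open sets, each containing a point of $\Q^2$. For (6), I would argue that $U_k(r)\uparrow U_k(r_k)$ as $r\downarrow r_k$ and that a value below $U_k(r_k)$ at an interior point of $Q_{r_k}(z_k)$ would either spawn a forbidden increasing-outward radial configuration or contradict the maximality of $A_k$; continuity of $u$ then yields $u\ge U_k(r_k)$ on the whole closed inner disc.

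\emph{Main obstacle.} The delicate part is the core step: converting the purely \emph{pointwise, first-order} reflection identity of Definition \ref{def2-1} — which a priori only links a point $y$ to a possibly distant partner $\tilde y$ through a global bump segment — into the \emph{local} geometric conclusion that level curves are circular with one well-defined center. The difficulties are (i) ensuring, near a regular point, that admissible chords $[p,q]$ (with $u$ exceeding the common level in between) actually exist in enough directions so the hypothesis of local symmetry is triggered; (ii) proving that the centers produced by different chords and by nearby levels coincide and vary continuously, so that $z(x_0)$ is genuinely well-defined; and (iii) controlling the interface between the radial pieces and the critical set $\{\nabla u=0\}$, where the annuli terminate, in order to close the maximality argument and establish (6). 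I expect (i)--(ii) to carry the bulk of the technical weight.
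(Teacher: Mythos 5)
You should first be aware that the paper contains no proof of this proposition: it is quoted verbatim as Theorem 6.1 of Brock \cite{Broc2000MR1758811} and used as a black box, so the only thing your attempt can be measured against is Brock's original argument. Your sketch does capture the correct geometric intuition behind that theorem --- the reflection identity forces normals of regular level curves to concur at a point, whence the level curves are circular arcs and $u$ is locally radial --- and you are candid about where the difficulties lie. But the core step, as written, has gaps that are structural rather than merely technical.

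The main one is fatal to the ``nearby chords'' picture. In Definition \ref{def2-1} the partner $\tilde y$ of $y$ is determined by a \emph{global} condition along the entire line through $y$ in the chosen direction: it is the first point at which $u$ returns to the value $u(y)$. It need not be close to $y$, and it need not lie on the same connected component of the level set $\{u=u(y)\}$, so the reflection identity does not directly relate gradients at two points of the arc $\Sigma$ through $x_0$. Consequently you cannot, in general, manufacture a two-parameter family of short admissible chords $[p,q]$ with $u>c_0$ on the open chord: if the level curve through $x_0$ is locally straight (locally $u$ could look like $f(x_1)$ with $f$ an even bump --- this is consistent with the reflection identity in the direction $x_1$ and is only excluded by the identity in transversal directions, where the partner is far away), then no short chord triggers the hypothesis, all nearby normals are parallel, and your perpendicular-bisector construction produces no center at all. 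The degenerate case $\nabla u(p)\parallel e$, where both ``normal lines'' collapse onto the chord itself, causes the same loss of information. The real content of Brock's proof is precisely the exploitation of these far-away pairings in all directions simultaneously, together with the finite-measure structure of the super-level sets of $u\in\mathcal{S}(\R^2)$; the latter is also what is needed to get the sign in (5) and the nesting statement (6), both of which you assert rather than derive (continuity and ``maximality of $A_k$'' alone do not exclude a radially increasing annular profile, whose super-level sets still have finite measure). Since the proposition is only cited in the paper, the appropriate course is to invoke Brock's theorem; a self-contained proof along your lines would need the chord argument replaced by one that genuinely works with the global pairing.
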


It can be seen that if $u\in C^1(\overline{B_R})$ ($u\not \equiv 0$) for some $R>0$ is locally symmetric when extended by zero outside $B_R$ and viewed as a function on $\mathbb{R}^2$, then the super-level sets $\{u>t\}$ $(t\ge 0)$ are countable unions of mutually disjoint discs, and $|\nabla u|=\text{const.}$ on the boundary of each of these discs. Below we show that a locally symmetric weakly superharmonic function must be radial.

\begin{lemma}\label{key0}
  Let $R > 0$, and let $u \in C^1(\overline{B_R})$ satisfy $u > 0$ in $B_R$ and $u = 0$ on $\partial B_R$. Suppose that $u$ is locally symmetric. Suppose also that $u$ is a weakly superharmonic function in the sense that
    \begin{equation*}
\int_{B_R} \nabla u \cdot \nabla \varphi \, \d x \geq 0
\end{equation*}
    for all non-negative functions $\varphi \in C_c^\infty(B_R)$. Then $u$ is a radially decreasing function, namely, $u=u(|x|)$ and
\begin{equation*}
  \frac{\d u}{\d r}<0\ \ \ \text{for}\ \, r=|x|\in (0, R).
\end{equation*}
    \end{lemma}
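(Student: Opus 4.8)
The plan is to combine the decomposition of locally symmetric functions in Proposition~\ref{pro1} with the differential inequality encoded by weak superharmonicity. By the remark following Proposition~\ref{pro1}, every super-level set $\{u>t\}$ ($0\le t<\sup u$) is a countable union of mutually disjoint open discs, on the boundary of each of which $|\nabla u|$ is constant. The whole statement therefore reduces to the single geometric claim that all of these discs are \emph{concentric with the origin}; once this is known, $u=u(|x|)$ follows at once, and the strict monotonicity will be a by-product of the same superharmonicity estimate. I would organize the argument as a peeling/continuation procedure starting from the outer boundary and moving inward.

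First I would anchor the procedure at $\partial B_R$. Since $u>0$ in $B_R$ and $u=0$ on $\partial B_R$, the set $\{u>0\}$ is exactly the single disc $B_R$, which is centered at the origin. Set $M=\sup_{B_R}u$ and consider
\[
 I:=\bigl\{\,t\in[0,M):\ \{u>t\}\text{ is a single open disc centered at the origin}\,\bigr\}.
\]
I would show that $I$ is nonempty (it contains $0$), relatively open, and relatively closed in $[0,M)$, hence equal to $[0,M)$, which yields radial symmetry. The mechanism behind openness and closedness is a rigidity coming from local symmetry: if $\partial B_\rho(0)$ is the boundary circle of a super-level disc, then near this circle $u$ agrees with one of the radial profiles $U_k(|x-z_k|)$ from Proposition~\ref{pro1}, and a circle that is a level set of such a profile must be centered at $z_k$; matching it with $\partial B_\rho(0)$ forces $z_k=0$, so the adjacent annulus is itself centered at the origin. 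Going \emph{down} in level causes no trouble, since the number of connected components of $\{u>t\}$ is non-increasing as $t$ decreases and equals $1$ both at $t=0$ and at any $t\in I$, so no off-center component can appear for smaller $t$.

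The delicate direction, and the place where weak superharmonicity is essential, is moving \emph{up} in level into a hole $B_r(0)$ of the concentric family (the set where property~(6) of Proposition~\ref{pro1} gives $u\ge U_k(r)$). A priori such a hole could hide an off-center maximum, i.e.\ an annulus $A_j$ with center $z_j\neq0$ floating inside $B_r(0)$ and separated from $\partial B_r(0)$ by a plateau $\{u=U_k(r)\}$. I would exclude this as follows. For a radial profile $U$ one has $\Delta u=\rho^{-1}(\rho U')'$, so the transition ``strictly decreasing $\to$ constant'' in the outward radial direction makes $\rho U'$ jump upward and produces a positive singular part in $\Delta u$, contradicting $\Delta u\le 0$. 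Hence no plateau can sit on the low-value (outer) side of a strictly decreasing annulus: the annulus adjacent to $\partial B_r(0)$ must have $\partial B_r(0)$ itself as its outer boundary, and is therefore concentric with the origin. Recursing through the countably many holes of Proposition~\ref{pro1} propagates concentricity inward, giving $u=u(|x|)$. The same ``no decreasing-to-flat'' principle forbids any plateau at a value strictly below $M$, because higher values lie further inside and the profile would have to descend into such a plateau from within; this yields $U'(r)<0$ wherever $u<M$, i.e.\ the strict radial decrease.

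The main obstacle is exactly this promotion from \emph{local} radial symmetry about possibly many centers to \emph{global} radial symmetry about the origin: local symmetry alone permits off-center annuli and interspersed plateaus, superharmonicity alone does not see the disc structure, and only their interaction closes the gap. Two technical points need care in a full write-up. First, the countably many annuli and holes of Proposition~\ref{pro1} may accumulate, so the continuation argument for $I$ must be phrased through the nested/monotone structure of the super-level sets rather than by a naive finite induction. Second, since $u$ is only $C^1$, the singular-part computation for $\Delta u$ must be justified via the weak inequality $\int_{B_R}\nabla u\cdot\nabla\varphi\,\d x\ge0$ rather than by pointwise Morse-type reasoning. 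Finally, one should keep track of the borderline possibility of a central plateau at the maximal value $M$: it is consistent with radial symmetry and is the only configuration in which $\d u/\d r$ fails to be strictly negative, so in the regime where $u$ has no interior flat maximum (which is the setting in which the lemma is invoked) the stated strict decrease on $(0,R)$ holds.
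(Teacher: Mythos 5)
Your core mechanism is the right one and is essentially the paper's, but you reach it by a much longer route, and two of your intermediate claims do not survive as written. The paper's proof is a three-line argument applied directly to each annulus $A_k=B_{R_k}(z_k)\setminus Q_{r_k}(z_k)$ of Proposition~\ref{pro1}: if $R_k<R$, Brock's structure theorem gives $U_k'(R_k)=0$, while $u\ge U_k(R_k)$ on $B_{R_k}(z_k)$, $u\equiv U_k(R_k)$ on $\partial B_{R_k}(z_k)$, and $u$ is nonconstant there, so the Hopf boundary point lemma for the weakly superharmonic $u$ forces $U_k'(R_k)<0$, a contradiction; hence $R_k=R$, which (since $B_{R_k}(z_k)\subset B_R$) forces $z_k=0$ and, by disjointness of the $A_k$, that there is only one annulus. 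Your ``no decreasing-to-flat transition'' principle is exactly this Hopf-type statement, so the essential idea coincides; the entire continuation-in-$t$ scaffolding (the set $I$, openness/closedness, peeling through holes) is unnecessary once the boundary-point argument is applied to every $A_k$ simultaneously.

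Two steps would fail as stated. First, the claimed mechanism of an upward jump of $\rho U'$ producing a positive singular part of $\Delta u$ is vacuous here: $u\in C^1$ means $U'$ is continuous across the transition sphere, so there is no jump and no Dirac mass. The correct weak-form argument, which you gesture at, is that $(\rho U')'\le 0$ on the open annulus forces $\rho U'(\rho)$ to be non-increasing, so once $U'<0$ somewhere it cannot return to $0$ at the outer radius; equivalently, one invokes Hopf on $B_{R_k}(z_k)$, which also avoids your unproved assumption that the plateau fills a full annular neighborhood outside $\partial B_{R_j}(z_j)$. Second, your claim that the number of connected components of $\{u>t\}$ is non-increasing as $t$ decreases is false in general: a new component is born each time $t$ passes a local maximum value, and superharmonic functions may have many local maxima. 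Finally, your remark about a central plateau at the top level is a genuinely good catch: in the intended application $u=\mathcal{G}[1_D]$ with $D$ an annulus is constant on the hole, so the strict inequality $\d u/\d r<0$ on all of $(0,R)$ claimed in the lemma can only hold off the (possibly nontrivial) central plateau $\{u=\sup u\}$; what both proofs actually deliver is radial symmetry together with strict decrease on the unique Brock annulus, which is all the later arguments use.
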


\begin{proof}
   Let $A_k=B_{R_k}(z_k)\backslash {Q_{r_k}(z_k)}\subset B_R$ denote an annulus in the decomposition specified in Proposition \ref{pro1}. It holds that $u(x)=U_k(|x-z_k|)$, $x\in A_k$. Note that $U_k'(r_k)=0$ and $U_k'(R_k)=0$ if $R_k<R$; see also \cite[Remark 2.2]{Bro2022MR4375744}. We claim that $R_k=R$, and consequently, $u$ is a radially decreasing function in $B_R$. If the assertion would not hold, then $U_k'(R_k)=0$. Notice that $u=c$ on $\partial B_{R_k}(z_k)$ for some constant $c\ge 0$. Since $u$ is weakly superharmonic, by the Hopf lemma, we have that $U_k'(R_k)<0$. This leads to a contradiction. The proof is thus complete.
   \end{proof}

\subsection{A symmetry criterion due to F. Brock}

The following symmetry criterion is attributed to Brock \cite{Broc2000MR1758811}.
\begin{proposition}[\cite{Broc2000MR1758811}, Theorem 6.2]\label{pro2}
  Let $R>0$. Let $u\in H^{1}_0(B_R) \cap C(\overline{B_R})$ be such that $u>0$ in $B_R$ and $u$ is continuously differentiable on $\left\{x\in B_R: 0<u(x)<\sup u\right\}$. Suppose that
  \begin{equation*}
    \lim_{t\to 0}\frac{1}{t}\left(\int_{B_R} |\nabla u|^2\,\d x-\int_{B_R} |\nabla u^t|^2\,\d x\right)=0.
  \end{equation*}
  Then $u$ is locally symmetric in the direction $x_1$, where $u$ is understood as a function on $\mathbb{R}^2$ extended by zero outside $B_R$.
\end{proposition}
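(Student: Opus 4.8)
The plan is to study the one-parameter energy $D(t):=\int_{B_R}|\nabla u^t|^2\,\d x$ and to show that the vanishing of its right derivative at $t=0$, which is exactly the hypothesis, forces local symmetry. By Proposition~\ref{pro0}(10) applied with the convex function $G(s)=s^2$, we already know that $D$ is nonincreasing, so $D(0)-D(t)\ge 0$ for small $t$ and the hypothesis says precisely that this nonnegative quantity is $o(t)$, i.e. $D'(0^+)=0$. The whole proof therefore reduces to producing a \emph{first-variation formula}: an explicit, manifestly nonnegative expression $\mathcal R(u)$ such that $\lim_{t\to0^+}t^{-1}(D(0)-D(t))=\mathcal R(u)$ (or at least $\ge \mathcal R(u)$), with the property that $\mathcal R(u)=0$ if and only if $u$ is locally symmetric in the direction $x_1$.

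To set up the formula I would first use the crucial slicing property of the continuous Steiner symmetrization: since $\{u^t>c\}=\T_t(\{u>c\})$ and $\T_t$ acts on each horizontal slice $\{x_1:(x_1,x_2)\in M\}$ independently, for every fixed $x_2$ the map $x_1\mapsto u^t(x_1,x_2)$ is simply the one-dimensional continuous symmetrization of $x_1\mapsto u(x_1,x_2)$. I would then split $|\nabla u^t|^2=|\partial_1u^t|^2+|\partial_2u^t|^2$ and analyze each term by the coarea formula, working over regular levels $c$ (a full-measure set by Sard's theorem) on which the slice super-level set $\{x_1:u(x_1,x_2)>c\}$ is a finite union of disjoint open intervals. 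On each such interval the flow is, to first order in $t$, a rigid translation of the level-$c$ segment with velocity equal to minus its center (this is the infinitesimal content of the interval property), so the endpoints, the value $c$, and the slopes of the endpoint-versus-level functions can all be tracked explicitly.

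Carrying out the differentiation, the $\partial_1$-energy of a slice contributes a term proportional to the square of the rate at which the segment centers drift with the level $c$ --- this rate vanishes exactly when each slice profile is symmetric --- while the $\partial_2$-energy contributes the mismatch between the tangential derivatives at matched endpoints. Assembling the slice contributions and integrating, one obtains a defect functional that, at each matched endpoint pair $(y,\tilde y)$ of the kind appearing in Definition~\ref{def2-1}, is controlled from below by quantities of the form $(\partial_2u(y)-\partial_2u(\tilde y))^2$ and $(\partial_1u(y)+\partial_1u(\tilde y))^2$. Hence $\mathcal R(u)=0$ forces $\partial_2u(y)=\partial_2u(\tilde y)$ and $\partial_1u(y)=-\partial_1u(\tilde y)$ for almost every matched pair, and the continuity of $\nabla u$ on $\{0<u<\sup u\}$ upgrades this to the pointwise statement of local symmetry.

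The main obstacle, and the part requiring the most care, is the rigorous justification of the first-variation formula rather than its algebraic form. Two issues dominate: first, the continuous Steiner symmetrization flow is only piecewise smooth because distinct interval components of a slice collide and merge as $t$ grows, and one must check that these merging events contribute nonnegatively (indeed they strictly decrease the energy) and do not spoil the differentiation at $t=0$; second, interchanging $\lim_{t\to0}$ with the coarea integrals requires uniform control, for which I would lean on the Lipschitz and gradient-monotonicity estimates of Proposition~\ref{pro0}(9)--(10) together with dominated convergence, while excluding the measure-zero set of critical levels and the critical points of $u$. Once these measure-theoretic and regularity points are handled --- this is exactly the technical core of Brock's argument in \cite{Broc2000MR1758811} --- the conclusion follows at once from $\mathcal R(u)=0$.
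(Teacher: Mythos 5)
First, a point of reference: the paper does not prove this proposition at all. It is imported verbatim as Theorem 6.2 of \cite{Broc2000MR1758811}, so there is no internal argument to compare yours against; the only meaningful comparison is with Brock's original proof.

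Measured against that, your outline identifies the correct strategy, and several of your heuristics are accurate: the slice-wise action of $\T_t$, the fact that to first order each interval component of a slice super-level set translates with velocity equal to minus its center, and the shape of the defect functional, which in Brock's argument is indeed controlled from below by expressions vanishing exactly when $\partial_2u(y)=\partial_2u(\tilde y)$ and $\partial_1u(y)=-\partial_1u(\tilde y)$ at matched pairs. You also correctly note that Proposition~\ref{pro0}(10) gives monotonicity of the Dirichlet energy, so the hypothesis really is the statement $D'(0^+)=0$.

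The genuine gap is that the first-variation inequality $\liminf_{t\to0^+}t^{-1}(D(0)-D(t))\ge\mathcal R(u)$ is the entire mathematical content of the theorem, and your proposal asserts it rather than derives it. Everything you list as ``the part requiring the most care'' --- justifying differentiation through the coarea integrals, handling the merging of interval components, excluding critical levels --- is precisely where Brock's proof lives, and none of it is carried out. Two structural points are also worth flagging. First, Brock does not establish a global identity $\lim t^{-1}(D(0)-D(t))=\mathcal R(u)$; he argues by contradiction, localizing near a single point $y$ where the conditions of Definition~\ref{def2-1} fail, representing the nearby level sets as graphs over the level parameter via the implicit function theorem (this is where continuous differentiability on $\{0<u<\sup u\}$ enters), and extracting a strictly positive linear-in-$t$ energy drop from that neighborhood alone; monotonicity then lets him discard the rest of the domain. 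Your global-functional formulation would additionally require summability of the slice defects, which is not obviously available. Second, your claim that merging events ``strictly decrease the energy'' and are harmless at $t=0$ needs an actual argument: components that are about to merge at $t=0^+$ are exactly the configurations where the naive first-order expansion of the flow breaks down. As it stands, the proposal is a faithful road map to Brock's proof, not a proof.
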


\section{Radial symmetry of stationary vortex patches}\label{s3}
In this section, our objective is to establish the radial symmetry of stationary vortex patches. Given a stationary patch $D\subset \mathbb{D}$, let $u=\mathcal{G}[1_D]$ denote the associated stream function of the flow. Recalling \eqref{1-3}, we have $u=C_i$ on each connected component of
$\partial D$, where the constants can be different on different connected components.

\subsection{Warm-up: Radial symmetry of simply connected stationary patches}

We begin by examining the radial symmetry of simply connected stationary patches. Suppose that $D$ is a simply connected stationary vortex patch. Our goal is to prove that $D$ is a disc. This can be achieved using the classical symmetry result for positive solutions of semilinear equations; see \cite[Remark 2.2]{Wang2021MR4172850}. In fact, it is easy to see that
\begin{align}\label{3-1}
\begin{cases}
-\Delta u=1_D\ &  \text{in}\ \mathbb{D},\\
u=c\ &\text{on}\ \partial D,   \\
u=0\ &\text{on}\ \partial \mathbb{D},
\end{cases}
\end{align}
for some $c\in \R$. Applying the maximum principle, we conclude that $c > 0$ and $D=\{u>c\}$. Therefore, we have
\begin{align}\label{3-2}
\begin{cases}
-\Delta u=1_{\{u>c\}}\ &  \text{in}\ \mathbb{D},\\
u=0\ &\text{on}\ \partial \mathbb{D}
\end{cases}
\end{align}
for some $c>0$. By Corollary 3.9 in \cite{Fra2000MR1751289}, we observe that $u$ is radially decreasing, implying that $D$ is a disc centered at the origin.

\subsection{Radial symmetry of non-simply connected stationary patches}
In this subsection, we aim to prove the radial symmetry of a connected stationary patch $D$, where $D$ may be non-simply connected. Let $D\subset \mathbb{D}$ be a domain whose boundary consists of $n+1$ mutually disjoint Jordan curves. We denote the outer boundary of $D$ by $\Gamma_0$, and the inner boundaries by $\Gamma_i$ for $i = 1, \dots, n$. Set $V_i=\operatorname{int}(\Gamma_i)$ for $i=0, \dots, n$ (each $V_i$ is a simply connected domain). Note that $D$ has $n$ holes, namely, $V_i$ for $i=1, \dots, n$. Recall that $u=\mathcal{G}[1_D]$. By assumption on $D$, there exist constants $\{c_i\}_{i=0}^{n}$ such that
\begin{align}\label{3-1}
\begin{cases}
-\Delta u=1_D\ &  \text{in}\ \mathbb{D},\\
u=c_i\ &\text{on}\ \Gamma_i,\ \text{for}\ i=0,\dots, n,  \\
u=0\ &\text{on}\ \partial \mathbb{D}.
\end{cases}
\end{align}
Note that $u$ is a \emph{weakly superharmonic} function in $\D$, meaning that
\begin{equation*}
\int_\D \nabla u \cdot \nabla \varphi \, \d x \geq 0
\end{equation*}
for all non-negative functions $\varphi \in C_c^\infty(\D)$. By the maximum principle, we conclude that $c_i >c_0\ge0$ for $i = 1, \dots, n$. Furthermore, $u\equiv c_i$ in each $V_i$ for $i = 1, \dots, n$. Notice that we do not rule out the possibility that $\Gamma_0=\partial \D$. If $\Gamma_0 = \partial \D$, then $c_0 = 0$; otherwise, $c_0 > 0$.

\begin{proof}
Note that the desired radial symmetry will be achieved if we can show that $u$ is radial symmetric. In view of Lemma \ref{key0}, it is sufficient to prove that $u$ is locally symmetric, where $u$ is regarded as a function on $\mathbb{R}^2$, extended by zero outside $\mathbb{D}$. We will show that $u$ is locally symmetric in the direction $x_1$. The local symmetry in other directions can be established in a similar manner. Recalling Definition \ref{csts}, let $u^t$ denote the CStS of $u$ with respect to $x_1$. Thanks to Proposition \ref{pro2}, our task now is to prove that
\begin{equation}\label{3-0}
  \int_\D |\nabla u^t|^2\,\d x-\int_\D |\nabla u|^2\,\d x=o(t)
\end{equation}
as $t\to 0$. Here, the symbol $o(t)$ denotes any function such that $\lim_{t\to 0} o(t)/t=0$. By Proposition \ref{pro0} (10), we first have that
\begin{equation*}
  \int_\D |\nabla u^t|^2\,\d x-\int_\D |\nabla u|^2\,\d x\le 0.
\end{equation*}
So it suffices to show that
\begin{equation}\label{3-2}
  \int_\D |\nabla u^t|^2\,\d x-\int_\D |\nabla u|^2\,\d x\ge o(t)
\end{equation}
as $t\to 0$. Let $(u^t-u)$ be a test function. From \eqref{3-1}, we deduce that
\begin{equation}\label{3-3}
  \int_{\D}\nabla u \cdot \nabla(u^t-u)\,\d x=\int_{\D} (u^t-u)1_D\,\d x.
\end{equation}
Applying the Cauchy–Schwarz inequality, the left-hand side of \eqref{3-3} is estimated as follows
\begin{equation}\label{3-4}
   \int_{\D}\nabla u \cdot \nabla(u^t-u)\,\d x=\int_\D \nabla u\cdot \nabla u^t\,\d x-\int_\D|\nabla u|^2\,\d x\le \frac{1}{2}\int_\D|\nabla u^t|^2\,\d x-\frac{1}{2}\int_\D|\nabla u|^2\,\d x.
\end{equation}
Combining \eqref{3-2}, \eqref{3-3}, and \eqref{3-4}, we see that the problem reduces to showing that
\begin{equation*}
  \int_{\D} (u^t-u)1_D\,\d x\ge o(t)
\end{equation*}
as $t\to 0$. In fact, we will establish a stronger result, namely,
\begin{equation}\label{3-5}
  \int_{\D} (u^t-u)1_D\,\d x= o(t)
\end{equation}
as $t\to 0$. Notice that
\begin{equation}\label{3-6}
   \int_{\D} (u^t-u)1_D\,\d x   =\int_{D}(u^t-u)\,\d x=\int_{V_0}(u^t-u)\,\d x-\sum_{i=1}^{n}\int_{\overline{V_i}}(u^t-u)\,\d x.
\end{equation}
First, we show that
\begin{equation}\label{3-7}
  \int_{\overline{V_i}}(u^t-u)\,\d x=o(t),\ \ \ i=1,\dots, n,
\end{equation}
as $t\to 0$. Let $\phi_1(s)=1_{(c_i,\infty)}(s)$ and $\phi_2(s)=1_{[c_i,\infty)}(s)$ be two nondecreasing functions. In view of Proposition \ref{pro0} (3) and (6), we have $\phi_j(u^t)=[\phi_j(u)]^t$, and $\phi_j(u^t)\to \phi_j(u)$ in $L^1(\D)$ as $t\to 0$ for $j=1, 2$. It follows that $\LL^2(\overline{V_i}\cap\{u^t\not= c_i\})\to 0$ as $t\to 0$. By Proposition \ref{pro0} (10), we have $\|u^t-u\|_{L^\infty(\D)}\le Ct$ for some constant $C>0$ independent of $t$. Therefore, we conclude that
\begin{equation*}
 \left| \int_{\overline{V_i}}(u^t-u)\,\d x \right|\le \int_{\overline{V_i}}|u^t-u|\,\d x\le Ct \LL^2(\overline{V_i}\cap\{u^t\not= c_i\})=o(t),
\end{equation*}
from which \eqref{3-7} follows. Next we prove that
\begin{equation}\label{3-8}
\int_{V_0}(u^t-u)\,\d x=o(t)
\end{equation}
as $t\to 0$. By Proposition \ref{pro0} (3), we see that $(u^t-c_0)_+$ is a rearrangement of $(u-c_0)_+$. Hence
\begin{equation}\label{3-9}
  \int_{\D}(u^t-c_0)_+\d x=\int_{\D}(u-c_0)_+\d x,
\end{equation}
from which it follows that
\begin{equation*}
\begin{split}
   0 & = \int_{\D}(u^t-c_0)_+\d x-\int_{\D}(u-c_0)_+\d x \\
     &  =\int_{\D}\left(\int_{0}^{1}f(u(x)+\theta (u^t(x)-u(x)))\d \theta   \right)(u^t(x)-u(x))\,  \d x,
\end{split}
\end{equation*}
where $f(s):=1_{(c_0, \infty)}(s)$. Thus, we have
\begin{equation}\label{3-10}
\begin{split}
  & \int_{V_0}(u^t-u)\,\d x\\
   &=\int_{\D}f(u(x))(u^t(x)-u(x))\,\d x\\
  & =\int_{\D}f(u(x))(u^t(x)-u(x))\,\d x-\int_{\D}\left(\int_{0}^{1}f(u(x)+\theta (u^t(x)-u(x)))\d \theta   \right)(u^t(x)-u(x))\,  \d x \\
     & =\int_\D \left(f(u(x))- \int_{0}^{1}f(u(x)+\theta (u^t(x)-u(x)))\d \theta  \right)(u^t(x)-u(x))\,  \d x.
\end{split}
\end{equation}
Recalling that $\|u^t-u\|_{L^\infty(\D)}\le Ct$ and the monotonicity of $f$, it follows that
\begin{equation}\label{3-11}
\begin{split}
    &  \left|\int_\D \left(f(u(x))- \int_{0}^{1}f(u(x)+\theta (u^t(x)-u(x)))\d \theta  \right)(u^t(x)-u(x)) \, \d x \right|\\
     & \le Ct  \int_\D \left | f(u(x))- \int_{0}^{1}f(u(x)+\theta (u^t(x)-u(x)))\d \theta  \right|  \d x \\
     & \le Ct \int_\D \int_{0}^{1}\left |f(u(x)+\theta (u^t(x)-u(x)))-f(u(x))\right|\d \theta    \d x\\
     & \le Ct \int_{\D} |f(u(x))-f(u^t(x))|\,\d x.
\end{split}
\end{equation}
Note that $[f(u)]^t=f(u^t)$, which implies $\|f(u^t)-f(u)\|_{L^1(\D)}\to 0$ as $t\to 0$. Combining this with \eqref{3-10} and \eqref{3-11}, we obtain \eqref{3-8}. Thus, \eqref{3-5} follows directly from the combination of  \eqref{3-6}, \eqref{3-7} and \eqref{3-8}. Hereby, we have established \eqref{3-0}, and, by Proposition \ref{pro2}, $u$ is locally symmetric in the direction $x_1$. The same estimate \eqref{3-0} can be established for CStS in arbitrary directions. It follows that $u$ is locally symmetric. The proof is thus complete.
\end{proof}

For the convenience of later discussion, we refine and generalize the ideas demonstrated above. We have the following two useful results.
\begin{lemma}\label{key1}
  Let $u\in C^1(\overline{\D})$ be such that $u>0$ in $\D$ and $u=0$ on $\partial \D$. Recalling Definition \ref{csts}, let $u^t$ denote the CStS of $u$ with respect to $x_1$. Let $V$ be a measurable subset of $\D$. Suppose that $u\equiv c$ in $V$ for some constant $c>0$. Then
  \begin{equation*}
    \int_{V}|u^t-u|\,\d x=o(t)
  \end{equation*}
  as $t\to0$.
\end{lemma}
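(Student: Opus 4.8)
The goal is to show that if $u \equiv c > 0$ on a measurable set $V \subset \D$, then $\int_V |u^t - u|\,\d x = o(t)$. The plan is to reuse the mechanism that already appeared in the proof of \eqref{3-7}. The essential point is that on $V$ the function $u$ takes the single value $c$, so the deviation $u^t - u$ on $V$ is controlled by two independent factors: a pointwise $L^\infty$ bound of size $O(t)$ coming from Proposition \ref{pro0} (10), and the fact that the measure of the portion of $V$ where $u^t$ actually differs from $c$ shrinks to zero as $t \to 0$.

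First I would record the uniform estimate. Since $u \in C^1(\overline{\D})$ is Lipschitz (being $C^1$ on the compact set $\overline{\D}$) and supported in $\D \subset B_1$, Proposition \ref{pro0} (10) gives $\|u^t - u\|_{L^\infty(\D)} \le LR\,t = Lt$ for a constant $L$ independent of $t$; write this as $\|u^t-u\|_{L^\infty(\D)} \le Ct$. Consequently
\begin{equation*}
  \int_V |u^t - u|\,\d x \le Ct\,\LL^2\bigl(V \cap \{u^t \ne c\}\bigr),
\end{equation*}
because on the complementary set $V \cap \{u^t = c\}$ the integrand vanishes (there $u = c = u^t$). Thus everything reduces to showing $\LL^2(V \cap \{u^t \ne c\}) \to 0$ as $t \to 0$.

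To control this measure I would introduce the two monotone threshold functions $\phi_1(s) = 1_{(c,\infty)}(s)$ and $\phi_2(s) = 1_{[c,\infty)}(s)$, exactly as in the argument for \eqref{3-7}. Since $u \equiv c$ on $V$, a point $x \in V$ satisfies $u^t(x) \ne c$ precisely when $\phi_1(u^t)(x) \ne \phi_1(u)(x)$ or $\phi_2(u^t)(x) \ne \phi_2(u)(x)$; indeed $u^t(x) = c$ forces $\phi_1(u^t)(x) = 0 = \phi_1(u)(x)$ and $\phi_2(u^t)(x) = 1 = \phi_2(u)(x)$, while any $u^t(x) \ne c$ violates one of these. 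Hence
\begin{equation*}
  \LL^2\bigl(V \cap \{u^t \ne c\}\bigr) \le \sum_{j=1}^{2} \int_{\D} |\phi_j(u^t) - \phi_j(u)|\,\d x.
\end{equation*}
By Proposition \ref{pro0} (3) (commutativity with monotone $\phi_j$, noting $\phi_j$ may be adjusted by a constant without affecting the $L^1$ difference) we have $\phi_j(u^t) = [\phi_j(u)]^t$, and then Proposition \ref{pro0} (6) (continuity in $L^p$, here $p = 1$, applicable since each $\phi_j(u)$ is bounded with support in $\D$ of finite measure) yields $\|\phi_j(u^t) - \phi_j(u)\|_{L^1(\D)} \to 0$ as $t \to 0$. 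Therefore $\LL^2(V \cap \{u^t \ne c\}) \to 0$, and combining with the displayed $L^\infty$ estimate gives $\int_V |u^t - u|\,\d x \le Ct \cdot o(1) = o(t)$, as claimed.

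The argument is essentially routine given the preliminaries; the only point requiring a little care is the bookkeeping showing $V \cap \{u^t \ne c\}$ is captured by the symmetric differences of the two level sets $\{u > c\}$ and $\{u \ge c\}$ under symmetrization. I would make sure to state clearly that one needs \emph{both} $\phi_1$ and $\phi_2$: the strict and non-strict thresholds together pin down the value $c$ from above and below, so that any value of $u^t$ other than exactly $c$ is detected by at least one of them. No monotonicity or sign hypothesis on $u$ beyond positivity is needed, and the set $V$ need only be measurable, which is why this lemma is flexible enough to serve later both for the holes $\overline{V_i}$ and for other constant-value regions.
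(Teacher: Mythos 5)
Your proof is correct and follows essentially the same route as the paper: the paper's proof of this lemma simply refers back to the argument for \eqref{3-7}, which is exactly the combination of the $L^\infty$ bound $\|u^t-u\|_{L^\infty(\D)}\le Ct$ from Proposition \ref{pro0} (10) with the fact that $\LL^2(V\cap\{u^t\neq c\})\to 0$, the latter obtained via the two threshold functions $\phi_1=1_{(c,\infty)}$ and $\phi_2=1_{[c,\infty)}$ together with commutativity and $L^1$-continuity of the CStS. Your write-up just makes explicit the bookkeeping (that both the strict and non-strict thresholds are needed to detect any deviation of $u^t$ from the value $c$) that the paper leaves implicit.
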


\begin{proof}
The desired result is obtained through a similar analysis to that used in the proof of \eqref{3-7}. Indeed, we first check that $\LL^2(V\cap\{u^t\not= c\})\to 0$ as $t\to 0$. In view of Proposition \ref{pro0} (10), we have
\begin{equation*}
  \int_{V}|u^t-u|\,\d x\le Ct \LL^2(V\cap\{u^t\not= c\})=o(t)
\end{equation*}
  as $t\to0$. The proof is thus complete.
\end{proof}

\begin{lemma}\label{key2}
  Let $u\in C^1(\overline{\D})$ be such that $u>0$ in $\D$ and $u=0$ on $\partial \D$. Let $\Gamma \subset \overline{\D}$ be a Jordan curve, and let $V=\operatorname{int}(\Gamma)$ denote the interior of $\Gamma$. Suppose that $u$ is weakly superharmonic in $\D$ and that $u=c$ on $\Gamma$ for some constant $c\ge0$. Recalling Definition \ref{csts}, let $u^t$ denote the CStS of $u$ with respect to $x_1$. Suppose also that $u>c$ in $V$, and that there exists a sequence of decreasing regular values ${\gamma_j}>c$, with $j = 1, 2, \dots$, such that $\lim_{j \to \infty} \gamma_j = c$. Then
  \begin{equation}\label{4-2}
    \int_{V}(u^t-u)\,\d x=o(t)
  \end{equation}
  as $t\to0$.
\end{lemma}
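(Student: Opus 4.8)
The plan is to adapt the argument used for \eqref{3-8} in the preceding proof, which treats exactly the outer region $V_0$, to the general Jordan-curve interior $V$. The structure is: on the set $V$ where $u > c$, the integral $\int_V (u^t - u)\,\d x$ can be written using the indicator $f(s) = 1_{(c,\infty)}(s)$, and the gain from rearrangement invariance of $(u-c)_+$ offsets the naive $O(t)$ bound to produce $o(t)$. Concretely, I would first record the rearrangement identity
\begin{equation*}
\int_{\D}(u^t-c)_+\,\d x=\int_{\D}(u-c)_+\,\d x,
\end{equation*}
which holds because $(u-c)_+$ and its CStS share all super-level sets by Proposition \ref{pro0}(3). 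Differentiating the difference via the fundamental theorem of calculus along the segment from $u$ to $u^t$ gives
\begin{equation*}
0=\int_{\D}\Big(\int_0^1 f\big(u+\theta(u^t-u)\big)\,\d\theta\Big)(u^t-u)\,\d x,
\end{equation*}
and subtracting this from $\int_\D f(u)(u^t-u)\,\d x$ isolates $\int_{\{u>c\}}(u^t-u)\,\d x$ as the integral of $(u^t-u)$ against a difference of $f$-values.

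The subtlety, and the main point where this differs from the $V_0$ computation, is that $\{u>c\}$ need not coincide with $V$: the super-level set $\{u>c\}$ could be strictly larger than $V$ if $u$ exceeds $c$ somewhere outside $\overline{V}$. So I would first argue that, because $\Gamma=\partial V$ is a level curve $\{u=c\}$ and $u$ is weakly superharmonic with $u>c$ inside $V$, one can reduce to the region $V$ cleanly — either $V$ is itself a connected component of $\{u>c\}$, or the contributions from other components are handled by the already-established Lemma \ref{key1} applied to the complementary flat pieces. Using $\|u^t-u\|_{L^\infty(\D)}\le Ct$ from Proposition \ref{pro0}(10) together with the monotonicity of $f$, the error estimate mirrors \eqref{3-11}:
\begin{equation*}
\Big|\int_V(u^t-u)\,\d x\Big|\le Ct\int_\D\big|f(u)-f(u^t)\big|\,\d x,
\end{equation*}
and $\|f(u^t)-f(u)\|_{L^1(\D)}\to 0$ follows from $[f(u)]^t=f(u^t)$ and the $L^1$-continuity in Proposition \ref{pro0}(6), giving $o(t)$.

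The role of the hypothesis on regular values $\gamma_j>c$ with $\gamma_j\downarrow c$ is, I expect, precisely to control the geometry of the level curve $\Gamma=\{u=c\}$ as a boundary of an interior $\operatorname{int}(\Gamma)$: since a Jordan curve may be wild (even of positive measure), one cannot freely assume $\Gamma$ is a nice regular level set, and the approximating sequence of regular super-level sets $\{u>\gamma_j\}$ lets one pass to the limit in the measure-theoretic identifications $\{u^t=\gamma_j\}=\{u=\gamma_j\}^t$ without ever needing $\Gamma$ itself to be regular. The hardest step will therefore be the \emph{bookkeeping that ties the abstract super-level identity back to the specific region $V$}: one must verify that the flat set $\{u=c\}\cap V$ has the measure behavior needed so that the full contribution of $V$ — not merely of $\{u>c\}$ — is captured, using the decreasing regular values $\gamma_j$ to exhaust $V\setminus\{u=c\}$ from inside and invoking weak superharmonicity (via the strong maximum principle) to preclude interior flat plateaus at height $c$ inside $V$. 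Once the contribution is localized to $V$ this way, the $o(t)$ estimate is identical to that for \eqref{3-8}, and the proof concludes.
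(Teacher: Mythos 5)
Your overall engine (the rearrangement identity for $(u-\cdot)_+$, the Taylor trick with $f=1_{(\cdot,\infty)}$, and the bound $Ct\,\|f(u^t)-f(u)\|_{L^1}$) is the right one, and you correctly identify the crux: the global identity $\int_\D(u^t-c)_+\,\d x=\int_\D(u-c)_+\,\d x$ only yields $\int_{\{u>c\}}(u^t-u)\,\d x=o(t)$ over the \emph{whole} super-level set, not over $V$. But your proposed fix does not close this gap. The set $\{u>c\}\setminus\overline V$ is a union of further open components on which $u$ is \emph{not} constant, so Lemma \ref{key1} does not apply to them; and asserting that each such component separately contributes $o(t)$ is exactly the statement of Lemma \ref{key2} for those components, so the reduction is circular. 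Moreover, even if $V$ were the unique component of $\{u>c\}$, nothing in your argument localizes the rearrangement identity to $V$: the cancellation $0=\int_\D\bigl(\int_0^1 f(u+\theta(u^t-u))\,\d\theta\bigr)(u^t-u)\,\d x$ is inherently global over $\D$.

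The paper's proof supplies precisely the missing localization mechanism, and it uses the regular values $\gamma_j$ differently from what you guess. Fix $\varepsilon>0$ and a regular value $\gamma_k>c$ close to $c$; then $u^{-1}(\{\gamma_k\})\cap V$ is a finite union of $C^1$ Jordan curves with interiors $G_i$, and weak superharmonicity gives $V\setminus\bigcup_i\overline{G_i}=\{x\in V:\,c<u(x)<\gamma_k\}$, whose measure can be made smaller than $\varepsilon/C$; that thin region is then estimated crudely by $Ct\cdot(\varepsilon/C)=\varepsilon t$, so the final conclusion is a $\limsup\le\varepsilon$ argument rather than a direct $o(t)$ identity. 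On each $G_i$ one chooses a separating neighborhood $U_i\subset V$ with $\{u>\gamma_k\}\cap U_i=G_i$ and invokes the fact that CStS acts independently on functions with disjoint compact supports for all sufficiently small $t$, so that $(u^t-\gamma_k)_+1_{U_i}$ is a rearrangement of $(u-\gamma_k)_+1_{U_i}$ \emph{locally}; this localized rearrangement identity is the ingredient your argument lacks, and it is only available at a regular value $\gamma_k>c$ (finitely many, positively separated components), not at the possibly wild level $c$ itself. You would need to add both ingredients --- the local rearrangement identity on the $U_i$ and the $\varepsilon$-exhaustion of $V$ by the $\overline{G_i}$ --- for the proof to go through.
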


\begin{proof}
To prove \eqref{4-2}, it suffices to show that for any $\varepsilon>0$, we have
  \begin{equation}\label{4-3}
    \limsup_{t\to 0}\frac{\left|\int_{V}(u^t-u)\,\d x \right|}{t}\le \varepsilon.
  \end{equation}
  By Proposition \ref{pro0} (10), we have $\|u^t-u\|_{L^\infty(\D)}\le Ct$ for some constant $C>0$ independent of $t$. Since $\gamma_j$ is a regular value of $u$, the level set $u^{-1}(\{\gamma_j\})\cap V$ consists of a finite union of $C^1$ simple closed curves, denoted by $\{\ell_i\}_{i=1}^{n_j}$. Set $G_i=\operatorname{int}(\ell_i)$. Note that $u$ is weakly superharmonic in $\D$. It follows that
  \begin{equation*}
    V\backslash\left(\cup_{i=1}^{n_j}\overline{G_i}\right)=\{x\in V\mid c<u(x)<\gamma_j\}
  \end{equation*}
  which implies that $\LL^2\left(V\backslash\left(\cup_{i=1}^{n_j}\overline{G_i}\right)\right)\to 0$ as $j\to \infty$. Let $k$ be a sufficiently large integer such that
  \begin{equation}\label{4-3'}
    C\LL^2\left(V\backslash\left(\cup_{i=1}^{n_{k}}\overline{G_i}\right)\right)<\varepsilon.
  \end{equation}
   We compute
  \begin{equation}\label{4-4}
    \int_V (u^t-u)\,\d x=\sum_{i=1}^{n_{k}}\int_{G_i}(u^t-u)\,\d x+\int_{V\backslash\left(\cup_{i=1}^{n_k}\overline{G_i}\right)}(u^t-u)\,\d x.
  \end{equation}
Now let us prove that
\begin{equation}\label{4-5}
  \int_{G_i}(u^t-u)\,\d x=o(t),\ \ \ i=1,\dots, n_k,
\end{equation}
as $t\to0$. We can choose pairwise disjoint domains $U_i$, $i=1,\dots, n_k$, such that $\overline{G_i}\subset U_i\subset V$ and $\{u>\gamma_k\}\cap U_i=G_i$ for each $i=1,\dots, n_k$. By the definition of the CStS, we see that $(u^t-\gamma_k)_+1_{U_i}$ is a rearrangement of $(u-\gamma_k)_+1_{U_i}$ for sufficiently small $t$. Hence, for all sufficiently small $t$, it follows that
\begin{equation*}
 \int_{U_i}(u^t-\gamma_k)_+\,\d x=\int_{U_i}(u-\gamma_k)_+\,\d x,
\end{equation*}
which yields that
\begin{equation}\label{4-6}
\begin{split}
   &\int_{G_i}(u^t-\gamma_k)_+\,\d x-\int_{G_i}(u-\gamma_k)_+\,\d x \\
   & = \left[\int_{U_i}(u^t-\gamma_k)_+\,\d x- \int_{U_i}(u-\gamma_k)_+\,\d x\right]- \int_{U_i\backslash {G_i}}(u^t-\gamma_k)_+\,\d x \\
     & =-\int_{U_i\backslash {G_i}}\left[(u^t-\gamma_k)_+-(u-\gamma_k)_+ \right]\,\d x.
\end{split}
\end{equation}
Let $f(s):=1_{(\gamma_{k}, \infty)}(s)$. In view of Proposition \ref{pro0} (3) and (6), we have $f(u^t)=[f(u)]^t$, and $f(u^t)\to f(u)$ in $L^1(\D)$ as $t\to 0$. It follows that $\LL^2\left((U_i\backslash {G_i}\right)\cap \{u^t>\gamma_k\})\to 0$ as $t\to 0$. Hence, we have
\begin{equation}\label{4-7}
\begin{split}
      \left| \int_{U_i\backslash {G_i}}\left[(u^t-\gamma_k)_+-(u-\gamma_k)_+ \right]\,\d x\right|&\le \int_{U_i\backslash {G_i}}\left|(u^t-\gamma_k)_+-(u-\gamma_k)_+ \right|\,\d x \\
     & \le Ct \LL^2\left((U_i\backslash {G_i})\cap \{u^t>\gamma_k\}\right)=o(t).
\end{split}
\end{equation}
Combining \eqref{4-6} and \eqref{4-7}, we get
\begin{equation*}
\begin{split}
    o(t) &  =\int_{G_i}(u^t-\gamma_k)_+\,\d x-\int_{G_i}(u-\gamma_k)_+\,\d x \\
     & =\int_{G_i}\left(\int_{0}^{1}f(u(x)+\theta (u^t(x)-u(x)))\d \theta   \right)(u^t(x)-u(x))\,  \d x.
\end{split}
\end{equation*}
Therefore, we have
\begin{equation}\label{4-10}
\begin{split}
\int_{G_i}&(u^t-u)\,\d x=\int_{G_i}f(u(x))(u^t(x)-u(x))\,\d x\\
      =\int_{G_i}& \left(f(u(x))- \int_{0}^{1}f(u(x)+\theta (u^t(x)-u(x)))\d \theta  \right)(u^t(x)-u(x))\,  \d x+o(t).
\end{split}
\end{equation}
Recalling that $\|u^t-u\|_{L^\infty(\D)}\le Ct$ and the monotonicity of $f$, it follows that
\begin{equation}\label{4-11}
\begin{split}
    &  \left|\int_{G_i} \left(f(u(x))- \int_{0}^{1}f(u(x)+\theta (u^t(x)-u(x)))\d \theta  \right)(u^t(x)-u(x)) \, \d x \right|\\
     & \le Ct  \int_{G_i} \left | f(u(x))- \int_{0}^{1}f(u(x)+\theta (u^t(x)-u(x)))\d \theta  \right|  \d x \\
     & \le Ct \int_{G_i} |f(u(x))-f(u^t(x))|\,\d x\le Ct\|f(u^t)-f(u)\|_{L^1(\D)}=o(t).
\end{split}
\end{equation}
Thus, \eqref{4-5} follows directly from the combination of  \eqref{4-10} and \eqref{4-11}. In view of \eqref{4-4}, we conclude that
\begin{equation*}
   \int_V (u^t-u)\,\d x=o(t)+\int_{V\backslash\left(\cup_{i=1}^{n_k}\overline{G_i}\right)}(u^t-u)\,\d x,
\end{equation*}
which yields that
\begin{equation*}
    \limsup_{t\to 0}\frac{\left|\int_{V}(u^t-u)\,\d x \right|}{t} \le \limsup_{t\to 0}\frac{\left|\int_{V\backslash\left(\cup_{i=1}^{n_k}\overline{G_i}\right)}(u^t-u)\,\d x \right|}{t} \le  C\LL^2\left(V\backslash\left(\cup_{i=1}^{n_{k}}\overline{G_i}\right)\right)\le \varepsilon,
\end{equation*}
where \eqref{4-3'} was used in the final inequality. Thus, we have proved \eqref{4-3}, and the proof is complete.
\end{proof}
\begin{remark}
We note that, with suitable modifications to the proof of Lemmas \ref{key2} , it is likely possible to extend \eqref{4-2} to the form
\begin{equation*}
\int_{V}g(u)(u^t - u)\,\d x = o(t),
\end{equation*}
where $g=g_1+g_2$, with $g_1$ being continuous and $g_2$ a function of bounded variation. Such a generalization opens up the possibility of analyzing the radial symmetries of non-negative solutions to certain elliptic problems. Nevertheless, the form presented in Lemma \ref{key2} is sufficient for our purposes here.
\end{remark}

\subsection{Radial symmetry of disconnected stationary patches}
In this subsection, we generalize the previous result to a stationary patch $D$ with multiple disjoint components. Let $D\subset \mathbb{D}$ be an open set whose boundary consists of a finite collection of mutually disjoint Jordan curves. We denote by $\{D_i\}_{i=1}^m\, (m\ge 2)$  the connected components of $D$. Clearly, each $D_i$ is a domain whose boundary is composed of a finite collection of mutually disjoint Jordan curves. We denote the outer boundary of $D_i$ by $\Gamma_0^{(i)}$, and the inner boundaries by $\Gamma^{(i)}_j$ for $j = 1, \dots, n_i$. Set $V_j^{(i)}=\operatorname{int}(\Gamma^{(i)}_j)$ for $j=0, \dots, n_i$. Recall that $u=\mathcal{G}[1_D]$. By assumption on $D$, there exist constants $\{c^{(i)}_j: i=1, \dots, m;\ j=0, \dots, n_i\}$ such that
\begin{align}\label{4-1}
\begin{cases}
-\Delta u=1_D\ &  \text{in}\ \mathbb{D},\\
u=c^{(i)}_j\ &\text{on}\ \Gamma^{(i)}_j,\ \text{for}\ i=1, \dots, m;\ j=0, \dots, n_i,  \\
u=0\ &\text{on}\ \partial \mathbb{D}.
\end{cases}
\end{align}
Note that $u$ is a weakly superharmonic function in $\D$. By the maximum principle, we conclude that $c^{(i)}_j> 0$ for $i=1, \dots, m;\ j=1, \dots, n_i$. Moreover, we have $c_0^{(i)} \geq 0$ for $i = 1, \dots, m$, and at most one of them equals zero. For example, if $c_0^{(i)} = 0$, then $\Gamma_0^{(i)}$ must coincide with $\partial \D$.

We are now in a position to prove the radial symmetry of the stationary patch $D$. We shall proceed as in the proof of the previous subsection.
\begin{proof}
The desired radial symmetry will be obtained if we can show that $u$ is radial symmetric. By Lemma \ref{key0}, it is enough to prove that $u$ is locally symmetric. We will show that $u$ is locally symmetric in the direction $x_1$. The local symmetry in other directions can be established in a similar manner. Recalling Definition \ref{csts}, let $u^t$ denote the CStS of $u$ with respect to $x_1$. Thanks to Proposition \ref{pro2}, it suffices to prove that
\begin{equation}\label{4-13}
  \int_\D |\nabla u^t|^2\,\d x-\int_\D |\nabla u|^2\,\d x=o(t)
\end{equation}
as $t\to 0$. Note that
\begin{equation*}
  \int_\D |\nabla u^t|^2\,\d x-\int_\D |\nabla u|^2\,\d x\le 0.
\end{equation*}
So it suffices to show that
\begin{equation}\label{4-14}
  \int_\D |\nabla u^t|^2\,\d x-\int_\D |\nabla u|^2\,\d x\ge o(t)
\end{equation}
as $t\to 0$. Let $(u^t-u)$ be a test function. From \eqref{4-1}, we deduce that
\begin{equation}\label{4-15}
  \int_{\D}\nabla u \cdot \nabla(u^t-u)\,\d x=\int_{\D} (u^t-u)1_D\,\d x.
\end{equation}
Using the Cauchy–Schwarz inequality, we get
\begin{equation*}
   \int_{\D}\nabla u \cdot \nabla(u^t-u)\,\d x\le \frac{1}{2}\int_\D|\nabla u^t|^2\,\d x-\frac{1}{2}\int_\D|\nabla u|^2\,\d x.
\end{equation*}
In view of \eqref{4-15}, our task now is to show that
\begin{equation*}
  \int_{\D} (u^t-u)1_D\,\d x\ge o(t)
\end{equation*}
as $t\to 0$. We will establish a stronger result, namely,
\begin{equation*}
  \int_{\D} (u^t-u)1_D\,\d x= o(t)
\end{equation*}
as $t\to 0$. Note that
\begin{equation*}
\begin{split}
    \int_{\D} (u^t-u)1_D\,\d x &  =\sum_{i=1}^{m}\int_{D_i} (u^t-u)\,\d x \\
     & =\sum_{i=1}^{m}\left(\int_{V_0^{(i)}} (u^t-u)\,\d x-\sum_{j=1}^{n_i}\int_{\overline{V_j^{(i)}}} (u^t-u)\,\d x  \right).
\end{split}
\end{equation*}
Using Lemmas \ref{key1}, \ref{key2} and Sard's theorem, we deduce that
\begin{equation*}
\begin{split}
   \int_{V_0^{(i)}} (u^t-u)\,\d x & =o(t),\ \ \ i=1,\dots,m, \\
     \int_{\overline{V_j^{(i)}}} (u^t-u)\,\d x &  =o(t),\ \ \ i=1,\dots,m;\ j=1,\dots, n_i,
\end{split}
\end{equation*}
as $t\to 0$. Thus, we obtain
\begin{equation*}
  \int_{\D} (u^t-u)1_D\,\d x= o(t),
\end{equation*}
and the desired result follows. The proof is thus complete.
\end{proof}

\section{Radial symmetry of rotating vortex patches with $\Omega<0$}\label{s4}
In this section, we prove that a rotating vortex patch (with multiple disjoint patches) must be radially symmetric if the angular velocity $\Omega<0$. Let $D\subset \mathbb{D}$ be a rotating vortex patch with $\Omega<0$, where the boundary consists of a finite collection of mutually disjoint Jordan curves. Let $u=\mathcal{G}[1_D-2\Omega]$ denote the relative stream function of the flow. Recalling \eqref{1-4}, we have $u=C_i$ on each connected component of $\partial D$, where the constants may differ on different components (referred to as \emph{locally constant} on $\partial D$).

Note that $u$ satisfies the following problem:
\begin{align}\label{5-1}
\begin{cases}
-\Delta u=1_D-2\Omega\ &  \text{in}\ \mathbb{D},\\
u\ \text{is locally constant on}\  \partial D, &  \\
u=0\ &\text{on}\ \partial \mathbb{D}.
\end{cases}
\end{align}
With \eqref{5-1} in hand, and using a similar discussion as in the previous section, we can easily arrive at the desired radial symmetry of $D$. Indeed, the problem reduces to establishing the local symmetry of $u$.
By testing \eqref{5-1} with $(u^t-u)$ and applying Lemmas \ref{key1} and \ref{key2}, we conclude that $u$ is locally symmetric. The details of the proof are analogous to those in the previous section and are therefore omitted.

\section{Radial symmetry of rotating vortex patches with $\Omega\ge1/2$}\label{s5}

In this section, we show that a rotating vortex patch (with multiple disjoint patches) must be radially symmetric if the angular velocity $\Omega\ge 1/2$. Let $D\subset \mathbb{D}$ be a rotating vortex patch with $\Omega\ge 1/2$, where the boundary consists of a finite collection of mutually disjoint Jordan curves. Let $u=\mathcal{G}[1_D-2\Omega]$ denote the relative stream function of the flow. Set $\psi=-u$. Recalling \eqref{1-4}, we have
\begin{align*}
\begin{cases}
-\Delta \psi=2\Omega-1_D\ &  \text{in}\ \mathbb{D},\\
\psi\ \text{is locally constant on}\  \partial D, &  \\
\psi=0\ &\text{on}\ \partial \mathbb{D}.
\end{cases}
\end{align*}
Note that $\psi$ is weakly superharmonic in $\D$ since $\Omega\ge 1/2$. Following the argument for the case $\Omega\le 0$, with $u$ replaced by $\psi$, we can readily establish the desired radial symmetry of $D$. Since the proof is nearly identical, the details are omitted.

We conclude this section with a remark on the radial symmetry of multi-scale rotating vortex patches.
\begin{remark}\label{re5}
  Let $\omega(x)=\sum_{i=1}^{n}\alpha_i 1_{D_i}$, where $\alpha_i\in \R$ are constants, each $D_i\subset\D$ is a domain whose boundary consists of a finite collection of mutually disjoint Jordan curves, and $D_i\cap D_j=\varnothing$ if $i\not=j$. Set $\lambda=\min\{\alpha_1,\dots, \alpha_n\}$ and $\Lambda=\max\{\alpha_1,\dots, \alpha_n\}$.

  Assume that $\omega$ is a uniformly rotating solution of \eqref{1-1} with angular velocity $\Omega\in (-\infty, \lambda]\cup [\Lambda, \infty)$, in the sense that $\mathcal{G}[\omega]+\frac{\Omega}{2}|x|^2$ is locally constant on $\partial D_i$ for all $i=1, \dots, n$. Then $\omega$ is radially symmetric.

  The proof of this result is similar to that of the uniformly rotating patches case and is therefore omitted. It is worth noting that the two new thresholds for the angular velocity primarily ensure that the relative stream function $\mathcal{G}[\omega]+\frac{\Omega}{2}|x|^2$ is weakly superharmonic or subharmonic.
\end{remark}

\section{Radial symmetry of uniformly rotating smooth solutions}\label{s6}
In this section, we study the radial symmetry of uniformly rotating smooth solutions. Let $\omega_0\in C^2(\overline{\D})$. Assume $\omega(x, t)=\omega_0(e^{-i\Omega t} x)$ is a stationary/uniformly rotating smooth solution of \eqref{1-1}, in the sense that
\begin{equation*}
   \mathcal{G}[\omega_0]+\frac{\Omega}{2}|x|^2\equiv C_i\ \ \ \text{on each connected component of a regular level set of}\ \omega_0,
\end{equation*}
where $C_i$ may vary across different connected components of a given regular level set $\{\omega_0=c\}$. We will show that $\omega_0$ must be radially symmetric if $\Omega\le \inf \omega_0/2$ or $\Omega\ge \sup \omega_0/2$. For clarity, let us consider the two cases separately.

\subsection{The case $\Omega\le  \inf \omega_0 /2$} Note that if $\Omega\le \inf \omega_0 /2$, then the associated stream function $u= \mathcal{G}[\omega_0]+\frac{\Omega}{2}|x|^2$ is weakly superharmonic harmonic in $\D$. Note that the desired symmetry of $\omega_0$ will be achieved if we can show that $u$ is radially symmetric. By virtue of Lemma \ref{key0}, the problem reduces to showing that $u$ is locally symmetric. Recalling Definition \ref{csts}, let $u^t$ denote the CStS of $u$ with respect to $x_1$. Thanks to Proposition \ref{pro2}, our task now is to prove that
\begin{equation*}
  \int_\D |\nabla u^t|^2\,\d x-\int_\D |\nabla u|^2\,\d x=o(t)
\end{equation*}
By Proposition \ref{pro0} (10), we have that
\begin{equation*}
  \int_\D |\nabla u^t|^2\,\d x-\int_\D |\nabla u|^2\,\d x\le 0.
\end{equation*}
The problem reduces to showing that
\begin{equation}\label{6-2}
  \int_\D |\nabla u^t|^2\,\d x-\int_\D |\nabla u|^2\,\d x\ge o(t)
\end{equation}
Note that $u$ satisfies the following elliptic problem
\begin{align}\label{6-1}
\begin{cases}
-\Delta u=\omega_0-2\Omega\ &  \text{in}\ \mathbb{D},\\
u=0\ &\text{on}\ \partial \mathbb{D}.
\end{cases}
\end{align}
 Taking $(u^t - u)$ as a test function, we obtain
\begin{equation}\label{6-3}
  \int_\D |\nabla u^t|^2\,\d x-\int_\D |\nabla u|^2\,\d x=\int_\D \omega_0 (u^t-u)\,\d x.
\end{equation}
The Cauchy–Schwarz inequality yields that
\begin{equation}\label{6-4}
   \int_{\D}\nabla u \cdot \nabla(u^t-u)\,\d x\le \frac{1}{2}\int_\D|\nabla u^t|^2\,\d x-\frac{1}{2}\int_\D|\nabla u|^2\,\d x.
\end{equation}
So it remains to show that
\begin{equation*}
  \int_\D \omega_0 (u^t-u)\,\d x\ge o(t)
\end{equation*}
as $t\to 0$. We will establish a slightly stronger result, namely
\begin{equation}\label{6-5}
 \mathcal{I}(t):= \int_{\D} \omega_0(u^t-u)\,\d x= o(t)
\end{equation}
as $t\to 0$. To this aim, we employ an approximation technique. For any integer $k>1$, we can approximate $\omega_0$ by a step function $w_n$ of the form $w_k=\sum_{i=1}^{M_k}\alpha_i 1_{D_i}(x)$, which satisfies all the following properties (see, e.g., page 2997 in \cite{Gom2021MR4312192}):
\begin{itemize}
  \item [(a)]Each $D_i$ is a connected open domain with $C^2$ boundary and possibly has a finite number of holes, denoted by $\{V_j^{(i)}\}_{j=0}^{n_i}$;
  \item [(b)]Each connected component of $\partial D_i$ is a connected component of a regular level set of $ \omega_0$;
  \item [(c)]$D_i\cap D_j=\varnothing$ if $i\not=j$;
  \item [(d)]$\|w_k-\omega_0\|_{L^\infty(\D)}\le \frac{2}{k}\|\omega_0\|_{L^\infty(\D)}$.
\end{itemize}
We estimate $\mathcal{I}(t)$ as follows:
\begin{equation*}
\begin{split}
   \mathcal{I}(t) &  = \int_{\D} \omega_0(u^t-u)\,\d x \\
     & =  \int_{\D} \left[\omega_0-w_k\right](u^t-u)\,\d x+\int_{\D} w_k(u^t-u)\,\d x\\
     &:=\mathcal{I}_1(t)+\mathcal{I}_2(t).
\end{split}
\end{equation*}
By Proposition \ref{pro0} (10), we have $\|u^t-u\|_{L^\infty(\D)}\le Ct$ for some constant $C>0$ independent of $t$. For any $\varepsilon>0$, there exists a sufficiently large inter $k$, such that
\begin{equation}\label{6-7}
\begin{split}
    |\mathcal{I}_1(t)| &  =\left| \int_{\D} \left[\omega_0-w_k\right](u^t-u)\,\d x \right| \\
     & \le \pi \|w_k-\omega_0\|_{L^\infty(\D)} \|u^t-u\|_{L^\infty(\D)} \le \frac{2\pi}{n}\|\omega_0\|_{L^\infty(\D)} Ct<\varepsilon t
\end{split}
\end{equation}
for all $t\ge 0$. Let $k$ be fixed. For $\mathcal{I}_2(t)$, we have
\begin{equation*}
\begin{split}
  \int_{\D} w_k(u^t-u)1_D\,\d x & =\sum_{i=1}^{M_k} \alpha_i\int_{D_i} (u^t-u)\,\d x \\
     & =\sum_{i=1}^{M_k} \alpha_i\left(\int_{D_i} (u^t-u)\,\d x- \sum_{j=0}^{n_i}\int_{\overline{V_j^{(i)}}} (u^t-u)\,\d x \right).
\end{split}
\end{equation*}
Recall that $u$ is locally constant on each connected component of $\partial D_i$. By Lemmas \ref{key1} and \ref{key2}, we have
\begin{equation*}
  \int_{D_i} (u^t-u)\,\d x=o(t),\ \ \ \int_{\overline{V_j^{(i)}}} (u^t-u)\,\d x=o(t),\ \ \ i=1,\dots, M_n;\ j=0, \dots, n_i
\end{equation*}
as $t\to 0$. It follows that $\mathcal{I}_2(t)=o(t)$ as $t\to 0$. Combining this with \eqref{6-7}, we conclude that $\mathcal{I}(t) = o(t)$ as $t \to 0$, and thus \eqref{6-5} is established. Therefore, we have proved that $u$ is locally symmetric in the direction $x_1$. The local symmetry in other directions can be established in a similar manner. It follows that $u$ is locally symmetric. Since $u$ is weakly superharmonic in $\D$, it follows from Lemma \ref{key0} that $u$ is radially symmetric. Hence, $\omega_0 = -\Delta u$ must also be radially symmetric, and the proof is complete.

\subsection{The case $\Omega\ge \sup \omega_0/2$} Note that if $\Omega\ge \sup \omega_0 /2$, then the associated stream function $u= \mathcal{G}[\omega_0]+\frac{\Omega}{2}|x|^2$ is weakly subharmonic harmonic in $\D$. Hence $\psi=-u$ is weakly superharmonic harmonic in $\D$. By repeating the previous argument with $u$ replaced by $\psi$, we readily obtain the desired radial symmetry. Since the proof is nearly identical, the details are omitted.

\section*{Acknowledgements}

The authors express their gratitude to Yao Yao for her valuable and constructive feedback, which has greatly improved the quality of this manuscript.

\subsection*{Data Availability} Data sharing is not applicable to this article as no datasets were generated or analyzed during the current study.

\subsection*{Declarations}

\smallskip
\ \ \\

\noindent \textbf{Conflict of interest} The authors declare that they have no conflict of interest.

\bibliography{ref}

\end{document}